\newtheorem{theorem}{Theorem}[section]
\newtheorem{lemma}[theorem]{Lemma}
\newtheorem{proposition}[theorem]{Proposition}
\newtheorem{definition}[theorem]{Definition}
\newtheorem{example}[theorem]{Example}
\newtheorem{question}[theorem]{Question}
\newtheorem{fact}[theorem]{Fact}
\newcommand{\nos}{\operatorname{N}}
\title{Incidence theorems for multivariate polynomials over finite fields}
\author{}
\date{}
\begin{document}

\author{Chong Shangguan\thanks{C. Shangguan is with Research Center for Mathematics and Interdisciplinary Sciences, Shandong University, Qingdao 266237, China, and Frontiers Science Center for Nonlinear Expectations, Ministry of Education, Qingdao 266237, China (Email: theoreming@163.com)}, Yulin Yang\thanks{Y. Yang is with Research Center for Mathematics and Interdisciplinary Sciences, Shandong University, Qingdao 266237, China (Email: forestyoung@mail.sdu.edu.cn)} and Tao Zhang\thanks{T. Zhang is with Institute of Mathematics and Interdisciplinary Sciences, Xidian University, Xi'an 710071, China (Email: zhant220@163.com)}
}

\maketitle

\begin{abstract}
\noindent We study incidence problems for multivariate polynomials over a finite field $\mathbb{F}_q$. Given two families of $m$-variate polynomials, we count the number of triples $(f,g,x)$ such that $f$ belongs to the first family, $g$ belongs to the second family, $x\in\mathbb{F}_q^m$, and $f(x)=g(x)$. We show that for any subsets $\mathcal{L},\mathcal{L}'\subseteq V_{m,r}$, where $V_{m,r}$ denotes the vector space of all $m$-variate polynomials over $\mathbb{F}_q$ of degree at most $r$, the number of such triples is at most
$$q^{m-1}|\mathcal{L}||\mathcal{L}'|+O\big(q^{\dim V_{m,r}-1}\sqrt{|\mathcal{L}||\mathcal{L}'|}\big).$$
We further show that if $\mathcal{L}$ and $\mathcal{L}'$ are contained in a subspace $V\subseteq V_{m,r}$ satisfying a suitable separating condition, then the same estimate holds with $\dim V_{m,r}$ replaced by $\dim V$. Our upper bound is essentially sharp when $q^{m-1}|\mathcal{L}||\mathcal{L}'|$ dominates the summation.

As applications, we derive incidence bounds for points and multivariate polynomials. These results recover and strengthen several previously known bounds for point-line incidences and point-univariate-polynomial incidences.

Our proof is spectral, relying on an expander mixing lemma for general abelian Cayley color graphs together with Fourier analysis over finite fields.
\end{abstract}

\noindent\textbf{Keywords:} incidence problem in finite fields; multivariate polynomials; Cayley color graphs; expander mixing lemma

\noindent\textbf{MSC2020:} 11T06; 51E30; 05B30

\section{Incidence theorems}\label{sec:1}

\subsection{Incidence theorems for points and multivariate polynomials}

\noindent Incidence problems are a central theme of combinatorial and discrete geometry. A classical starting point is the Szemer\'edi--Trotter theorem, which states that for a set of points $\mathcal{P}$ and a set of lines $\mathcal{L}$ in the real plane $\mathbb{R}^2$, the number of incidences
$I(\mathcal{P},\mathcal{L})=|\{(v,\ell)\in\mathcal{P}\times\mathcal{L}:v\in\ell\}|$
is bounded by
$O\big(|\mathcal{P}|^{2/3}|\mathcal{L}|^{2/3}+|\mathcal{P}|+|\mathcal{L}|\big).$
This result has been generalized in many directions, including to other fields and to incidences with higher-degree curves; see, for example, \cite{bourgain2004sum,clarkson1990combinatorial,kong2025point,mohammadi2023point,pach1998number,phuong2016incidence,phuong2017incidences,rudnev2018number,spencer1984unit,tamo2025points,toth2015szemeredi}. It has also found many applications in combinatorics \cite{iosevich2004fourier,spencer1984unit,tamo2025points}, geometry \cite{guth2015erdHos,sharir2018distinct}, and number theory \cite{bourgain2004sum,elekes1997number}.

Over finite fields, the situation is rather different. The direct analogue of the Szemer\'edi--Trotter theorem fails over $\mathbb{F}_q^2$: all $q^2$ points and all $q^2+q$ lines determine $q^3+q^2$ incidences, which is of order $(|\mathcal{P}||\mathcal{L}|)^{3/4}$. Seeking finite-field analogues of Szemer\'edi--Trotter type bounds, Bourgain, Katz, and Tao \cite{bourgain2004sum} proved that if $\mathcal{P}$ is a set of $N$ points and $\mathcal{L}$ is a set of $N$ lines in $\mathbb{F}_p^2$, where $p$ is prime and $N=p^\alpha$ for some $0<\alpha<2$, then the number of incidences is at most $O(N^{3/2-\varepsilon})$, where $\varepsilon>0$ depends only on $\alpha$. Their result has since been improved in various parameter ranges; see, for example, \cite{helfgott2011explicit,iosevich2024improved,jones2011explicit,jones2016improved,rudnev2022growth,stevens2017improved,petridis2022energy,vinh2011szemeredi}.

A particularly clean finite-field incidence theorem was obtained by Vinh \cite{vinh2011szemeredi}. For a set of points $\mathcal{P}$ and a set of lines $\mathcal{L}$ in $\mathbb{F}_q^2$, he proved that
\begin{equation}\label{eq:Vinh-point-line}
    \left|I(\mathcal{P},\mathcal{L})-\frac{|\mathcal{P}||\mathcal{L}|}{q}\right|
    \le q^{1/2}\sqrt{|\mathcal{P}||\mathcal{L}|}.
\end{equation}
Tamo \cite{tamo2025points} extended this direction to point-polynomial incidences. For a set of points $\mathcal{P}\subseteq\mathbb{F}_q^2$ and a set of univariate polynomials $\mathcal{L}$ of degree at most $r$, he showed that
\begin{equation}\label{eq:Tamo-original}
    \left|I(\mathcal{P},\mathcal{L})-\frac{|\mathcal{P}||\mathcal{L}|}{q}\right|
    \leq \sqrt{|\mathcal{P}||\mathcal{L}|(q+|\mathcal{L}|(r-1))}.
\end{equation}
More recently, Arala and Chow \cite{arala2025expansion} proved the bound
\begin{equation}\label{eq:Tamo-point-polynomial}
    \left|I(\mathcal{P},\mathcal{L})-\frac{|\mathcal{P}||\mathcal{L}|}{q}\right|
    \le q^{r/2}\sqrt{|\mathcal{P}||\mathcal{L}|}.
\end{equation}
This result was also implicitly proved by Mattheus, Mubayi, Nie and Verstra\"ete \cite{mattheus2025off}. When $r=1$, \eqref{eq:Tamo-original} and \eqref{eq:Tamo-point-polynomial} coincide. When $r\ge 2$, the two bounds are incomparable. 


As applications of our main result (see \cref{thm:cross-version}), below we present incidence theorems for points and multivariate polynomials, generalizing and strengthening both \eqref{eq:Vinh-point-line} and \eqref{eq:Tamo-point-polynomial}. 

Let $V_{m,r}=\{f\in\mathbb{F}_q[x_1,\ldots,x_m]:\deg(f)\le r\}$ denote the set of multivariate polynomials in $m$ variables over $\mathbb{F}_q$, with degree at most $r$. Clearly, $V_{m,r}$ is an $\binom{m+r}{r}$-dimensional vector space over $\mathbb{F}_q$.
For a set of points $\mathcal{P}\subseteq\mathbb{F}_q^{m+1}$ and a set of polynomials $\mathcal{L}\subseteq V_{m,r}$, we say that a point $v=(v_1,\ldots,v_{m+1})\in\mathcal{P}$ is {\it incident to} a polynomial $f\in\mathcal{L}$ if $f(v_1,\ldots,v_m)=v_{m+1}$. Let $I(\mathcal{P},\mathcal{L})=|\{(v,f)\in\mathcal{P}\times\mathcal{L}:f(v_1,\ldots,v_m)=v_{m+1}\}|$ denote the number of incidences between $\mathcal{P}$ and $\mathcal{L}$. 

\begin{proposition}\label{prop:point-multi-poly}
    For every set of points $\mathcal{P}\subseteq\mathbb{F}_q^{m+1}$ and every set of multivariate polynomials $\mathcal{L}\subseteq V_{m,r}$, 
    \begin{equation}\label{eq:point-multi-poly}
        \left|I(\mathcal{P},\mathcal{L})-\frac{|\mathcal{P}||\mathcal{L}|}{q}\right|\le q^{(\dim V_{m,r}-1)/2}\sqrt{|\mathcal{P}||\mathcal{L}|}.
    \end{equation}
\end{proposition}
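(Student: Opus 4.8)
The plan is to realize $I(\mathcal P,\mathcal L)$ as a restriction of the full point--polynomial incidence graph and then invoke the bipartite expander mixing lemma. Concretely, I would let $X=\mathbb F_q^{m+1}$ be the set of all points and $Y=V_{m,r}$ the set of all polynomials, and form the bipartite graph $G$ with biadjacency matrix $M\in\{0,1\}^{X\times Y}$ defined by $M_{v,f}=1$ exactly when $f(v_1,\dots,v_m)=v_{m+1}$. Writing $n=\dim V_{m,r}$, this graph is biregular: every polynomial is incident to exactly $q^m$ points, and every point is incident to exactly $q^{\,n-1}$ polynomials, because evaluation at a fixed $(v_1,\dots,v_m)$ is a surjective linear functional on $V_{m,r}$, so its fibres are cosets of an index-$q$ subgroup. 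In particular the edge density is $q^{-1}$, which will produce the main term $|\mathcal P||\mathcal L|/q$. Since the mixing lemma controls $|I(\mathcal P,\mathcal L)-|\mathcal P||\mathcal L|/q|$ by $\sigma_2\sqrt{|\mathcal P||\mathcal L|}$, where $\sigma_2$ is the second largest singular value of $M$, the whole problem reduces to showing $\sigma_2\le q^{(n-1)/2}$.

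The heart of the argument is to compute the singular values of $M$ through the Cayley structure of $M^{\mathsf T}M$. For $f,f'\in V_{m,r}$ one has $(M^{\mathsf T}M)_{f,f'}=|\{v\in X: f(v_1,\dots,v_m)=v_{m+1}=f'(v_1,\dots,v_m)\}|$, and since $v_{m+1}$ is forced once $(v_1,\dots,v_m)$ is chosen, this equals the number of $\mathbf u\in\mathbb F_q^m$ with $(f-f')(\mathbf u)=0$. Thus $(M^{\mathsf T}M)_{f,f'}=w(f-f')$ depends only on $f-f'$, where $w(g)=\nos(g)$ is the number of zeros of $g$ in $\mathbb F_q^m$; that is, $M^{\mathsf T}M$ is the weight matrix of a Cayley graph on $(V_{m,r},+)\cong\mathbb F_q^{\,n}$. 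Its eigenvalues are therefore the character sums $\lambda_\chi=\sum_{g\in V_{m,r}}w(g)\chi(g)$ as $\chi$ ranges over the characters of $V_{m,r}$. Swapping the order of summation gives $\lambda_\chi=\sum_{\mathbf u\in\mathbb F_q^m}\sum_{g:\,g(\mathbf u)=0}\chi(g)$, and since $H_{\mathbf u}=\{g:g(\mathbf u)=0\}$ is an index-$q$ subgroup, the inner sum is $q^{\,n-1}$ when $\chi$ is trivial on $H_{\mathbf u}$ and $0$ otherwise. Hence $\lambda_\chi=q^{\,n-1}\,|\{\mathbf u:\chi|_{H_{\mathbf u}}=1\}|$.

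The main obstacle — and the only place where the structure of $V_{m,r}$ is really used — is bounding the second eigenvalue, i.e. showing that for every nontrivial $\chi$ the set $\{\mathbf u:\chi|_{H_{\mathbf u}}=1\}$ has at most one element. I would fix a nontrivial additive character $\psi$ of $\mathbb F_q$, so that $\chi|_{H_{\mathbf u}}=1$ forces $\chi(g)=\psi\big(s\,g(\mathbf u)\big)$ for all $g$ and some $s\neq 0$. If two points $\mathbf u,\mathbf u'$ gave the same nontrivial $\chi$, via scalars $s,t\neq0$, then $s\,g(\mathbf u)=t\,g(\mathbf u')$ for every $g\in V_{m,r}$; testing $g\equiv 1$ yields $s=t$, and testing the linear monomials $g=x_1,\dots,x_m$ (which lie in $V_{m,r}$ as soon as $r\ge 1$) then forces $\mathbf u=\mathbf u'$. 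Consequently every nontrivial $\chi$ satisfies $\lambda_\chi\le q^{\,n-1}$, while $\lambda_\chi=q^{m+n-1}$ for the trivial character; since $M^{\mathsf T}M$ is positive semidefinite this gives $\sigma_1=q^{(m+n-1)/2}$ and $\sigma_2\le q^{(n-1)/2}$.

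Finally I would feed $\sigma_2\le q^{(n-1)/2}$ into the bipartite expander mixing lemma (\cite[Lemma 8]{de2012large}, \cite[Theorem 5.1]{haemers1995interlacing}) applied to $\mathcal P\subseteq X$ and $\mathcal L\subseteq Y$. Decomposing $\mathbf 1_{\mathcal P}$ and $\mathbf 1_{\mathcal L}$ into their components along the top singular vectors (the constant vectors) and their orthogonal complements, the top components reproduce the density term $|\mathcal P||\mathcal L|/q$, while the cross term is bounded by $\sigma_2\|\mathbf 1_{\mathcal P}^{\perp}\|\,\|\mathbf 1_{\mathcal L}^{\perp}\|\le\sigma_2\sqrt{|\mathcal P||\mathcal L|}$. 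This yields $|I(\mathcal P,\mathcal L)-|\mathcal P||\mathcal L|/q|\le q^{(n-1)/2}\sqrt{|\mathcal P||\mathcal L|}$, which is exactly \eqref{eq:point-multi-poly}. I expect the computation of $\sigma_2$ to be the crux; the remainder is standard spectral bookkeeping, and it is worth noting that this argument is precisely the point-polynomial specialization of the Cayley-graph framework underlying the paper's main results.
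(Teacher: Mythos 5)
Your proof is correct, but it reaches \eqref{eq:point-multi-poly} by a genuinely different route than the paper. You work directly with the bipartite point--polynomial incidence graph: you observe that $M^{\mathsf T}M$ is the Cayley matrix of the connection function $\nos_q$ on $(V_{m,r},+)$, compute its eigenvalues by a character-sum/annihilator argument, and feed the resulting bound $\sigma_2\le q^{(\dim V_{m,r}-1)/2}$ into the bipartite expander mixing lemma. The paper never touches the bipartite graph: it proves the polynomial--polynomial incidence bound (\cref{thm:cross-version}, via the mixing lemma for abelian Cayley color graphs, \cref{lem:expander-mixing-Cayley}, applied to $\operatorname{Cay}(V,\nos_q)$), and then converts polynomial--polynomial incidences into point--polynomial incidences by a Murphy--Petridis second-moment/variance argument (the proof of \cref{thm:subspace-point-multi-poly}, of which \cref{prop:point-multi-poly} is the special case $V=V_{m,r}$). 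The two routes share their technical core: your eigenvalue computation of $M^{\mathsf T}M$ is exactly the paper's \cref{thm:spectrum} together with \cref{lem:vanishing-subspace}, specialized to $V=V_{m,r}$ (where one may take $k_i=1$, i.e., the linear monomials you test). Indeed, your argument is the multivariate generalization of the way the paper says Tamo's bound \eqref{eq:Tamo-point-polynomial} follows from his spectrum computation. What your route buys is brevity for this one proposition; what the paper's route buys is the polynomial--polynomial incidence theorems (\cref{thm:cross-version} and \cref{thm:subpace-incidence-bound-multi-poly}), which are its main results and which the bipartite mixing lemma alone does not deliver, with the point--polynomial bound then falling out as a corollary. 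Note also that your argument, like the paper's, extends verbatim to subspaces with property $(*)$.

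One step you should tighten. From $\psi\bigl(s\,g(\mathbf u)\bigr)=\psi\bigl(t\,g(\mathbf u')\bigr)$ for all $g$ you conclude $s\,g(\mathbf u)=t\,g(\mathbf u')$ for all $g$. This is immediate only when $q$ is prime: when $q$ is a proper prime power, a nontrivial additive character $\psi$ of $\mathbb{F}_q$ has a kernel of index $p$, so $\psi(x)=1$ does not force $x=0$. The fix is one line: the map $g\mapsto s\,g(\mathbf u)-t\,g(\mathbf u')$ is an $\mathbb{F}_q$-linear functional on $V_{m,r}$, so its image is an $\mathbb{F}_q$-subspace of $\mathbb{F}_q$, hence either $\{0\}$ or all of $\mathbb{F}_q$; the latter would force $\psi$ to be trivial. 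Therefore the image is $\{0\}$, and the rest of your uniqueness argument (testing $g\equiv 1$ and $g=x_i$) goes through.
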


\noindent Plugging $\dim V_{1,1}=2$ and $\dim V_{1,r}=r+1$ into \eqref{eq:point-multi-poly} recover \eqref{eq:Vinh-point-line} and \eqref{eq:Tamo-point-polynomial}, respectively. Plugging $\dim V_{m,1}=m+1$ into \eqref{eq:point-multi-poly} recovers a point-hyperplane incidence bound of Vinh (see \cite[Theorem 4]{vinh2011szemeredi}). \footnote{For point-line incidences, in contrast to \eqref{eq:Vinh-point-line}, the bounds \eqref{eq:Tamo-point-polynomial} and \eqref{eq:point-multi-poly} do not consider vertical lines. Including vertical lines in $\mathcal{L}$ will increase the right-hand sides of \eqref{eq:Tamo-point-polynomial} and \eqref{eq:point-multi-poly} by at most $|\mathcal{P}|$ incidences, since every point is incident to at most one vertical line. Nevertheless, Tamo \cite{tamo2025points} showed that one can fix this issue by a random affine transformation argument, and obtained a modest improvement over \eqref{eq:Vinh-point-line} (see \cite[Proposition 1.4]{tamo2025points}).}  

It is natural to ask that if the set of polynomials $\mathcal{L}$ is contained in a subspace $V$ of $V_{m,r}$, then could one use $\dim V$ to replace $\dim V_{m,r}$ in \eqref{eq:point-multi-poly}? In general, this is not true, as shown by the following example. 

\begin{example}\label{ex:counter-example}
    Consider the set of points $\mathcal{P}_0=\{(\alpha,0):\alpha\in\mathbb{F}_q^m\}$ and the set of multivariate polynomials $\mathcal{L}_0=\{x_1g(x_1,\ldots,x_m):g\in V_{m,r-1}\}$. Observe that $\mathcal{L}_0$ is a subspace of $V_{m,r}$ and 
    $$\frac{|\mathcal{P}_0||\mathcal{L}_0|}{q}+q^{(\dim \mathcal{L}_0-1)/2}\sqrt{|\mathcal{P}_0||\mathcal{L}_0|}=q^{m-1}|\mathcal{L}_0|+q^{(m-1)/2}|\mathcal{L}_0|.$$ 
It is not hard to see that $I(\mathcal{P}_0,\mathcal{L}_0)=(2-\frac{1}{q})q^{m-1}|\mathcal{L}_0|$ (for completeness, see Appendix~\ref{sec-app-A} for a proof), which is roughly twice as large as the above value for $m\ge 2$ and large $q$. 
\end{example}

We show that if the subspace $V$ of $V_{m,r}$ satisfies certain {\it separating} condition, then one can use $\dim V$ instead of $\dim V_{m,r}$ in \eqref{eq:point-multi-poly}. It gives considerably sharper bounds when $\dim V\ll\dim V_{m,r}$. 

\begin{definition}[Separating subspace]\label{def:sep-subspace}
    We say that a subspace $V$ in $\mathbb{F}_q[x_1,\ldots,x_m]$ is a separating subspace if $V$ has a basis of monomials $\{x_1^{i_1}\cdots x_m^{i_m}:(i_1,\ldots,i_m)\in\mathcal{I}\}$, where $\mathcal{I}$ is a finite subset of $\mathbb{N}^m$, such that $(0,0,\ldots,0)\in\mathcal{I}$ and, for any distinct $\alpha,\beta\in\mathbb{F}_q^m$, there exists $(i_1,\ldots,i_m)\in\mathcal{I}$ satisfying $\alpha_1^{i_1}\cdots\alpha_m^{i_m}\neq \beta_1^{i_1}\cdots\beta_m^{i_m}$.
\end{definition}


\begin{example}\label{def:property-*}
Let $V$ be a subspace in $\mathbb{F}_q[x_1,\ldots,x_m]$ with a basis of monomials $\{x_1^{i_1}\cdots x_m^{i_m}:(i_1,\ldots,i_m)\in\mathcal{I}\}$, where $\mathcal{I}$ is a finite subset of $\mathbb{N}^m$ satisfying the following conditions:
    \begin{itemize}
        \item $(0,0,\ldots,0)\in\mathcal{I}$;
        \item there exist positive integers $k_1,k_2,\ldots,k_m$ such that $(k_1,0,\ldots,0),(0,k_2,\ldots,0),\ldots,(0,0,\ldots,k_m)\in\mathcal{I}$, with each $k_i$ satisfying $\gcd(k_i,q-1)=1$.
    \end{itemize}
Then $V$ is a separating subspace. Indeed, if $\alpha,\beta\in\mathbb{F}_q^m$ are distinct, then $\alpha_j\neq\beta_j$ for some $j$; since $\gcd(k_j,q-1)=1$, the map $t\mapsto t^{k_j}$ is injective on $\mathbb{F}_q$, and hence $\alpha_j^{k_j}\neq\beta_j^{k_j}$.
\end{example}

\begin{theorem}\label{thm:subspace-point-multi-poly}
    Let $V$ be a separating subspace of $V_{m,r}$. Then for every $\mathcal{P}\subseteq\mathbb{F}_q^{m+1}$ and $\mathcal{L}\subseteq V$,
    \begin{equation}\label{eq:subspace-point-multi-poly}
        \left|I(\mathcal{P,\mathcal{L}})-\frac{|\mathcal{P}||\mathcal{L}|}{q}\right|\leq q^{(\dim V-1)/2}\sqrt{|\mathcal{P}||\mathcal{L}|}.
    \end{equation}
\end{theorem}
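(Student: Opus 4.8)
The plan is to realize the incidence count as a bilinear form governed by the spectrum of a single Cayley color graph, in the spirit advertised in the abstract. First I would form the full bipartite incidence graph $H$ whose two sides are \emph{all} points $\mathbb{F}_q^{m+1}$ and \emph{all} polynomials of $V$, with $v=(v_1,\dots,v_{m+1})$ joined to $f$ exactly when $f(v_1,\dots,v_m)=v_{m+1}$. Since $V$ contains the constants (because $(0,\dots,0)\in\mathcal{I}$), the evaluation map $f\mapsto f(v')$ is a surjective linear functional on $V$ for every fixed $v'=(v_1,\dots,v_m)$, so each point has degree exactly $q^{\dim V-1}$, while each $f$ has degree $q^m$; hence $H$ is biregular and the expected count is $\frac{q^{\dim V-1}}{q^{\dim V}}|\mathcal{P}||\mathcal{L}|=\frac{|\mathcal{P}||\mathcal{L}|}{q}$, matching the main term of \eqref{eq:subspace-point-multi-poly}. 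It then suffices, via the bipartite expander mixing lemma (\cite[Lemma 8]{de2012large}, \cite[Theorem 5.1]{haemers1995interlacing}), to bound the second largest singular value of the biadjacency matrix $M$ by $q^{(\dim V-1)/2}$.

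The singular values of $M$ are the square roots of the eigenvalues of the Gram matrix $N=M^{\mathsf T}M$ indexed by $V$. A direct computation gives $N_{f,g}=|\{v'\in\mathbb{F}_q^m:(f-g)(v')=0\}|$, which depends only on the difference $f-g$; thus $N$ is precisely the weighted (color) Cayley graph on the abelian group $(V,+)\cong\mathbb{F}_q^{\dim V}$ whose connection weights record, by color $c\in\mathbb{F}_q$, the number of $v'$ with $(f-g)(v')=c$. Its eigenvalues are therefore obtained by evaluating the additive characters of $V$: writing each $h\in V$ by its coefficient vector $c\in\mathbb{F}_q^{\dim V}$ in the monomial basis $\{x^\alpha\}_{\alpha\in\mathcal{I}}$, letting $\phi(v')=((v')^\alpha)_{\alpha\in\mathcal{I}}$, and inserting $\mathbb{1}[h(v')=0]=\frac1q\sum_{t\in\mathbb{F}_q}\chi(t\,h(v'))$ before collapsing the sum over $c$, I obtain for the character indexed by $s\in\mathbb{F}_q^{\dim V}$ the eigenvalue
\[
\lambda_s=q^{\dim V-1}\cdot\bigl|\{(t,v')\in\mathbb{F}_q\times\mathbb{F}_q^m:t\,\phi(v')=-s\}\bigr|.
\]
For $s=0$ the constant coordinate of $\phi(v')$ (which equals $1$, so $\phi(v')\neq 0$) forces $t=0$, after which every $v'$ qualifies; this gives $\lambda_0=q^{m+\dim V-1}$, the top eigenvalue corresponding to the largest singular value $\sqrt{q^{\dim V-1}\cdot q^m}$.

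The heart of the argument is to show $\lambda_s\le q^{\dim V-1}$ for every $s\neq 0$, i.e.\ that the displayed count is at most one, and this is exactly where property $(*)$ enters. Reading off the coordinate of $t\,\phi(v')=-s$ at $\alpha=(0,\dots,0)$ forces $t=-s_{(0,\dots,0)}$, a single value of $t$; if this value were $0$ then all remaining coordinates would give $s=0$ as well, so for $s\neq 0$ at most one nonzero $t$ can occur. With $t$ fixed, the coordinates at $\alpha=(0,\dots,k_i,\dots,0)$ read $v_i^{k_i}=-s_\alpha/t$, and since $\gcd(k_i,q-1)=1$ the power map $x\mapsto x^{k_i}$ is a bijection of $\mathbb{F}_q$, so each $v_i$ --- and hence $v'$ --- is uniquely determined. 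Thus there is at most one pair $(t,v')$, giving $\lambda_s\le q^{\dim V-1}$ and therefore $\sigma_2(M)\le q^{(\dim V-1)/2}$; the bipartite mixing lemma then yields \eqref{eq:subspace-point-multi-poly}. I expect the main obstacle to be precisely this counting step: one must use both halves of property $(*)$ in tandem --- the constant monomial to pin down $t$ and the coprimality of the $k_i$ to pin down $v'$ --- and \cref{ex:counter-example} shows that dropping these hypotheses genuinely breaks the bound, so the real subtlety is verifying that these two ingredients together force the count down to $1$ rather than the naive $q-1$.
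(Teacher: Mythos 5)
Your proposal is correct, but it reaches \cref{thm:subspace-point-multi-poly} by a genuinely different route than the paper. The key overlap is the spectral computation: the Gram matrix $N=M^{\mathsf T}M$ you form is exactly the paper's polynomial incidence graph $\operatorname{Cay}(V,\nos_q)$ of Definition~\ref{def:Cay(V_m,r,N_q)}, and your evaluation of its eigenvalues --- inserting $\frac{1}{q}\sum_t\chi(t\,h(v'))$, then using the constant monomial to pin down $t$ and the coprimality $\gcd(k_i,q-1)=1$ to pin down $v'$, so that the count is at most one for $s\neq 0$ --- is an equivalent repackaging of the paper's key technical lemma (\cref{lem:vanishing-subspace}) and spectrum theorem (\cref{thm:spectrum}). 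Where you diverge is in how this spectral information is converted into a point--polynomial bound: you bound $\sigma_2(M)\le q^{(\dim V-1)/2}$ and invoke the bipartite expander mixing lemma of \cite{de2012large,haemers1995interlacing}, which is precisely the strategy the introduction attributes to Tamo \cite{tamo2025points} in the univariate case, here generalized to subspaces with property $(*)$. The paper never touches the bipartite graph: it proves a mixing lemma for abelian Cayley color graphs (\cref{lem:expander-mixing-Cayley}), deduces the cross incidence bound for two sets of polynomials (\cref{thm:cross-version}, hence \cref{thm:subpace-incidence-bound-multi-poly}), and only then transfers to points via a Murphy--Petridis style second-moment argument \cite{murphy2016point}, namely the variance bound $\sum_{v}\bigl(i_{\mathcal{L}}(v)-|\mathcal{L}|/q\bigr)^2\le q^{\dim V-1}|\mathcal{L}|$ followed by Cauchy--Schwarz. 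Your route is shorter and self-contained if \eqref{eq:subspace-point-multi-poly} is the sole target; the paper's detour buys the polynomial--polynomial incidence theorems, which are its main new results and which your bipartite argument does not yield by itself. The two transfers are close relatives, though: since $\sum_v i_{\mathcal{L}}(v)^2=\mathbf{1}_{\mathcal{L}}^{\mathsf T}M^{\mathsf T}M\mathbf{1}_{\mathcal{L}}$, the paper's variance step is essentially the one-sided content of the bipartite mixing lemma unwound by hand.
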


We have several remarks regarding \eqref{eq:subspace-point-multi-poly}. First, \eqref{eq:point-multi-poly} follows easily from \eqref{eq:subspace-point-multi-poly}, since $V_{m,r}$ is separating. Second, \eqref{eq:subspace-point-multi-poly} shows that when $V$ is separating and $|\mathcal{P}||\mathcal{L}|\gg q^{\dim V+1}$, we have $I(\mathcal{P,\mathcal{L}})=(1+o(1))\frac{|\mathcal{P}||\mathcal{L}|}{q}$. Third, in many cases (e.g., $|\mathcal{P}| > q$ and $|\mathcal{L}| > \frac{1}{r} q^{\dim V - m}$) \eqref{eq:subspace-point-multi-poly} improves the upper bound on $I(\mathcal{P,\mathcal{L}})$ obtained by the Cauchy-Schwarz inequality.

\begin{equation}\label{eq:point-multi-poly-CS}
    I(\mathcal{P},\mathcal{L})\leq\min\left\{|\mathcal{L}|+q^{\dim V_{m,r}/2-1}|\mathcal{P}||\mathcal{L}|^{1/2}, |\mathcal{P}|+r^{1/2}q^{(m-1)/2}|\mathcal{P}|^{1/2}|\mathcal{L}|\right\}.
\end{equation}

\noindent (For completeness, we include a proof of \eqref{eq:point-multi-poly-CS} in Appendix~\ref{sec:app-B}). Fourth, \eqref{eq:subspace-point-multi-poly} also implies a special case of a recent point-variety incidence bound of Kong and Tamo (see $d=1$ case in \cite[Theorem 1.2]{kong2025point}). 

Next, we shift from point-polynomial incidence bounds to polynomial-polynomial incidence bounds. The latter are more general and essentially imply \eqref{eq:subspace-point-multi-poly}.

\subsection{Incidence theorems for two sets of multivariate polynomials}

\noindent For a multivariate polynomial $f\in\mathbb{F}_q[x_1,\ldots,x_m]$, let $\nos_q(f)=|\{\alpha\in\mathbb{F}_q^m:f(\alpha)=0\}|$ denote the number of zeros of $f$ in $\mathbb{F}_q^m$. For two polynomials $f,f'\in\mathbb{F}_q[x_1,\ldots,x_m]$, it is natural to regard $\nos_q(f-f')=|\{\alpha\in\mathbb{F}_q^m:f(\alpha)=f'(\alpha)\}|$ as the number of {\it incidences} between $f$ and $f'$.  

Counting the number of zeros of polynomials over finite fields is a classic and vital topic in combinatorics and number theory. It is well-known by the Schwartz-Zippel lemma that for two distinct polynomials $f,f'\in V_{m,r}$, $\nos_q(f-f')\le rq^{m-1}$. However, if one generates $f,f'$ uniformly and independently at random from $V_{m,r}$, then, in expectation, $\nos_q(f-f')=q^{m-1}$. The gap between the {\it worst case} and the {\it average case} is a factor of $r$. It is interesting to study to what extent we can {\it bridge} such a gap. 

We show that for every sufficiently large subset $\mathcal{L}\subseteq V_{m,r}$, say $|\mathcal{L}|\gg V_{m,r}/q^{m}$, and two randomly chosen polynomials $f,f'\in\mathcal{L}$, $f-f'$ has in expectation $(1+o(1))q^{m-1}$ zeros. 

\begin{proposition}\label{prop:incidence-bound-multi-poly}
    For every set of multivariate polynomials $\mathcal{L}\subseteq V_{m,r}$, we have
    \begin{align}\label{eq:incidence-bound-multi-poly}
        q^{m-1}|\mathcal{L}|^2\le \sum_{f,f'\in\mathcal{L}}\nos_q(f-f')\le q^{m-1}|\mathcal{L}|^2+q^{\dim V_{m,r}-1}|\mathcal{L}|.
    \end{align}
\end{proposition}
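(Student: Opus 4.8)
The plan is to estimate the sum $\sum_{f,f'\in\mathcal{L}}\nos_q(f-f')$ by a double-counting argument over the underlying point set $\mathbb{F}_q^m$.

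Key identity to set up first. For each point $\alpha\in\mathbb{F}_q^m$, let me count how many pairs $(f,f')\in\mathcal{L}^2$ satisfy $f(\alpha)=f'(\alpha)$. Writing
\[
    \sum_{f,f'\in\mathcal{L}}\nos_q(f-f')=\sum_{\alpha\in\mathbb{F}_q^m}\bigl|\{(f,f')\in\mathcal{L}^2:f(\alpha)=f'(\alpha)\}\bigr|,
\]
I would partition $\mathcal{L}$ according to the evaluation value at $\alpha$. For a fixed $\alpha$, let $n_{\alpha,c}=|\{f\in\mathcal{L}:f(\alpha)=c\}|$ for each $c\in\mathbb{F}_q$. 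Then the inner count is exactly $\sum_{c\in\mathbb{F}_q}n_{\alpha,c}^2$, and of course $\sum_{c}n_{\alpha,c}=|\mathcal{L}|$ for every $\alpha$.

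Lower bound. By Cauchy--Schwarz (or convexity) applied to the $q$ values $n_{\alpha,c}$, I get $\sum_{c}n_{\alpha,c}^2\ge\frac{1}{q}\bigl(\sum_{c}n_{\alpha,c}\bigr)^2=\frac{|\mathcal{L}|^2}{q}$. Summing over the $q^m$ points $\alpha$ yields the left inequality $q^{m-1}|\mathcal{L}|^2$ immediately.

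Upper bound. This is the more substantive half, and I expect it to hinge on the structure of $V_{m,r}$ as a vector space. The idea is to rewrite $\sum_{c}n_{\alpha,c}^2-\frac{|\mathcal{L}|^2}{q}$ as a deviation term and bound the total deviation $\sum_{\alpha}\bigl(\sum_c n_{\alpha,c}^2-\frac{|\mathcal{L}|^2}{q}\bigr)$ by $q^{\dim V_{m,r}-1}|\mathcal{L}|$. The natural way is via the orthogonality of additive characters: write $n_{\alpha,c}=\sum_{f\in\mathcal{L}}\mathbf{1}[f(\alpha)=c]$, expand $\sum_c n_{\alpha,c}^2=\sum_{f,f'\in\mathcal{L}}\mathbf{1}[f(\alpha)=f'(\alpha)]$, and use $\mathbf{1}[f(\alpha)=f'(\alpha)]=\frac{1}{q}\sum_{t\in\mathbb{F}_q}\psi\bigl(t(f(\alpha)-f'(\alpha))\bigr)$ for a nontrivial additive character $\psi$. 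Separating the $t=0$ term recovers the main term $q^{m-1}|\mathcal{L}|^2$, and the remaining deviation becomes $\frac{1}{q}\sum_{t\ne0}\sum_{f,f'\in\mathcal{L}}\sum_{\alpha\in\mathbb{F}_q^m}\psi\bigl(t(f-f')(\alpha)\bigr)$. The crux is that the inner character sum over $\alpha$ vanishes unless $t(f-f')$ induces the zero function on $\mathbb{F}_q^m$, i.e.\ unless $f$ and $f'$ agree as functions; since $\dim V_{m,r}=\binom{m+r}{r}$ and evaluation is an injective linear map into $\mathbb{F}_q^{q^m}$ (as $r<q$, monomials give distinct functions), each such collision contributes at most $q^m$, and counting the number of colliding configurations gives the bound $q^{\dim V_{m,r}-1}|\mathcal{L}|$.

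The main obstacle will be making the upper-bound count precise: I need to control exactly how many pairs $(f,f')$ and scalars $t$ make $t(f-f')$ act as the zero polynomial function, and then verify that the accumulated mass is at most $q^{\dim V_{m,r}-1}|\mathcal{L}|$ rather than something larger. I expect the cleanest route is to interpret $\sum_c n_{\alpha,c}^2$ spectrally through the Cayley color graph framework promised in the abstract, where the eigenvalue bound feeds directly into the deviation estimate; alternatively, a direct argument bounding $\sum_\alpha\sum_c n_{\alpha,c}^2$ by the number of pairs of polynomials that agree somewhere, combined with the Schwartz--Zippel bound $\nos_q(f-f')\le rq^{m-1}$ reorganized appropriately, should also deliver the factor $q^{\dim V_{m,r}-1}$. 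I would reconcile these two viewpoints and pick whichever yields the stated constant exactly.
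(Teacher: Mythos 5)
Your lower bound is fine and is exactly the paper's argument (Cauchy--Schwarz applied to the evaluation counts $n_{\alpha,c}$, summed over the $q^m$ points). The upper bound, however, rests on a false claim: it is \emph{not} true that the inner sum $\sum_{\alpha\in\mathbb{F}_q^m}\psi\bigl(t(f-f')(\alpha)\bigr)$ vanishes unless $t(f-f')$ induces the zero function on $\mathbb{F}_q^m$. Complete exponential sums of nonzero polynomial functions need not vanish: for $g=x_1x_2\in V_{2,2}$ one gets $\sum_{\alpha\in\mathbb{F}_q^2}\psi(\alpha_1\alpha_2)=q$, and for $g=x^2$ with $q$ odd the sum is a Gauss sum of modulus $q^{1/2}$. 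The error is not cosmetic. If your vanishing claim were true, then (since for $r<q$ distinct elements of $V_{m,r}$ induce distinct functions) the only collisions would be $f=f'$, and your deviation term would be at most $\frac{1}{q}(q-1)q^m|\mathcal{L}|\le q^m|\mathcal{L}|$, i.e.\ you would have proven $\sum_{f,f'}\nos_q(f-f')\le q^{m-1}|\mathcal{L}|^2+q^m|\mathcal{L}|$. That conclusion is false: by \cref{ex:counter-example}, the set $\mathcal{L}_0=\{x_1g:g\in V_{m,r-1}\}$ has $\sum_{f,f'\in\mathcal{L}_0}\nos_q(f-f')=(2-\frac{1}{q})q^{m-1}|\mathcal{L}_0|^2$, which far exceeds $q^{m-1}|\mathcal{L}_0|^2+q^m|\mathcal{L}_0|$ since $|\mathcal{L}_0|=q^{\dim V_{m,r-1}}\gg q$. (Your accounting is also internally inconsistent: under your premise the collision count gives $q^m|\mathcal{L}|$, not the stated $q^{\dim V_{m,r}-1}|\mathcal{L}|$.) Your fallback via Schwartz--Zippel cannot work either, since it puts the factor $r$ into the main term, $\sum_{f,f'}\nos_q(f-f')\le q^m|\mathcal{L}|+rq^{m-1}|\mathcal{L}|^2$, which is precisely the loss the proposition is designed to avoid.

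The repair is to run the Fourier expansion over the polynomial space rather than over the point space. Keep your identity $\sum_{f,f'}\nos_q(f-f')=\frac{1}{q}\sum_{\alpha}\sum_{t\in\mathbb{F}_q}\sum_{f,f'\in\mathcal{L}}\psi\bigl(t(f(\alpha)-f'(\alpha))\bigr)$, but observe that for fixed $(t,\alpha)$ the map $f\mapsto\psi(tf(\alpha))$ is an additive character $\chi_{t,\alpha}$ of the group $V_{m,r}\cong\mathbb{F}_q^{\dim V_{m,r}}$, so the inner double sum equals $|\widehat{1_{\mathcal{L}}}(\chi_{t,\alpha})|^2\ge 0$ (this positivity re-proves your lower bound as well). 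The deviation term becomes $\frac{1}{q}\sum_{\alpha}\sum_{t\ne 0}|\widehat{1_{\mathcal{L}}}(\chi_{t,\alpha})|^2$; the characters $\chi_{t,\alpha}$ with $t\ne 0$ are pairwise distinct and nontrivial (this is the content of the paper's \cref{lem:vanishing-subspace}, using that the monomials $1,x_1,\ldots,x_m$ lie in the monomial basis), so Parseval gives $\sum_{t\ne 0,\,\alpha}|\widehat{1_{\mathcal{L}}}(\chi_{t,\alpha})|^2\le\sum_{\chi}|\widehat{1_{\mathcal{L}}}(\chi)|^2=q^{\dim V_{m,r}}|\mathcal{L}|$, hence the deviation is at most $q^{\dim V_{m,r}-1}|\mathcal{L}|$, as required. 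This is exactly the paper's proof in disguise: the $\chi_{t,\alpha}$ are the eigenvectors of the Cayley color graph $\operatorname{Cay}(V_{m,r},\nos_q)$ with nontrivial eigenvalue $q^{\dim V_{m,r}-1}$ (\cref{thm:spectrum}), and the Parseval step is the mixing lemma (\cref{lem:expander-mixing-Cayley}) specialized to $S=T=\mathcal{L}$.
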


It follows from \eqref{eq:incidence-bound-multi-poly} that for $|\mathcal{L}|\gg q^{\dim V_{m,r}-m}$, we have $\sum_{f,f'\in\mathcal{L}} \nos_q(f-f')=(1+o(1))q^{m-1}|\mathcal{L}|^2$. The above lower bound on $|\mathcal{L}|$ is essentially the best possible for $m=1$ or $r=1$, as shown by the aforementioned $\mathcal{L}_0=\{x_1g(x_1,\dots,x_m):g\in V_{m,r-1}\}$ in Example~\ref{ex:counter-example}. Indeed, it is easy to see that $|\mathcal{L}_0|=q^{\dim V_{m,r-1}}$ and $\sum_{f,f'\in \mathcal{L}_0} \nos_q(f-f')=(2-\frac{1}{q})q^{m-1}|\mathcal{L}_0|^2$, and for $m=1$ or $r=1$ one has $\dim V_{m,r}-m= \dim V_{m,r-1}$. It is an open question to determine for each $m\ge 2,r\ge 2$, the smallest $\tau_q(m,r)$, so that for every $\mathcal{L}\subseteq V_{m,r}$ with $|\mathcal{L}|\gg \tau_q(m,r)$, $\sum_{f,f'\in\mathcal{L}} \nos_q(f-f')=(1+o(1))q^{m-1}|\mathcal{L}|^2$. 

Similarly to the strengthening \eqref{eq:subspace-point-multi-poly} upon \eqref{eq:point-multi-poly}, we have the following theorem which implies \eqref{eq:incidence-bound-multi-poly}.

\begin{theorem}\label{thm:subpace-incidence-bound-multi-poly}
    Let $V$ be a separating subspace of $V_{m,r}$. Then for every $\mathcal{L}\subseteq V$, we have
    \begin{align}\label{eq:subpace-incidence-bound-multi}
        q^{m-1}|\mathcal{L}|^2\le\sum_{f,f'\in\mathcal{L}}\nos_q(f-f')\le q^{m-1}|\mathcal{L}|^2+q^{\dim V-1}|\mathcal{L}|.
    \end{align}
\end{theorem}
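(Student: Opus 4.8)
The plan is to prove both inequalities by a single second-moment computation, writing the left-hand side as a sum over evaluation points and analyzing the fluctuation of the fibers of the evaluation maps. For $\alpha\in\mathbb{F}_q^m$ and $t\in\mathbb{F}_q$ set $N_\alpha(t)=|\{f\in\mathcal{L}:f(\alpha)=t\}|$. Swapping the order of summation, $\nos_q(f-f')$ counts the points $\alpha$ at which $f$ and $f'$ agree, so
\[
\sum_{f,f'\in\mathcal{L}}\nos_q(f-f')=\sum_{\alpha\in\mathbb{F}_q^m}\sum_{t\in\mathbb{F}_q}N_\alpha(t)^2.
\]
Since $\sum_{t}N_\alpha(t)=|\mathcal{L}|$ for every $\alpha$, Cauchy--Schwarz gives $\sum_t N_\alpha(t)^2\ge|\mathcal{L}|^2/q$, and summing over the $q^m$ points yields the lower bound $q^{m-1}|\mathcal{L}|^2$ with no further hypotheses. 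For the upper bound I would isolate the main term by writing $\sum_t N_\alpha(t)^2=|\mathcal{L}|^2/q+\sum_t(N_\alpha(t)-|\mathcal{L}|/q)^2$, so that it suffices to prove
\[
E:=\sum_{\alpha\in\mathbb{F}_q^m}\sum_{t\in\mathbb{F}_q}\Big(N_\alpha(t)-\tfrac{|\mathcal{L}|}{q}\Big)^2\le q^{\dim V-1}|\mathcal{L}|.
\]

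Next I would open up $E$ with additive characters of $\mathbb{F}_q$. Fixing a nontrivial additive character $\psi_0$ and writing a general nontrivial character as $\psi_0(c\,\cdot)$ with $c\in\mathbb{F}_q^*$, orthogonality over $t$ collapses $E$ to $E=\tfrac1q\sum_{c\in\mathbb{F}_q^*}\sum_{\alpha}|S_c(\alpha)|^2$, where $S_c(\alpha)=\sum_{f\in\mathcal{L}}\psi_0(c\,f(\alpha))$. Because evaluation $\mathrm{ev}_\alpha\colon f\mapsto f(\alpha)$ is linear on $V$, expanding $|S_c(\alpha)|^2$ and summing over $\alpha$ realizes $\sum_\alpha|S_c(\alpha)|^2$ as a quadratic form $\mathbf{1}_\mathcal{L}^{*}A_c\mathbf{1}_\mathcal{L}$, where $A_c$ is the convolution (Cayley) matrix on the abelian group $(V,+)$ with kernel $h\mapsto\sum_\alpha\psi_0(c\,h(\alpha))$ and $\mathbf{1}_\mathcal{L}$ is the indicator of $\mathcal{L}$. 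This is exactly the point at which the incidence count becomes the weight of colored edges of an abelian Cayley color graph, and the estimate I want is precisely the content of the expander mixing lemma established in the paper.

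The heart of the argument is the spectral analysis of $A_c$. Diagonalizing the Cayley matrix by the characters $f\mapsto\psi_0(\ell(f))$ of $V$, indexed by $\ell\in V^{*}$, a short computation gives the eigenvalue $q^{\dim V}\,n_{c^{-1}\ell}$, where $n_\beta=|\{\alpha\in\mathbb{F}_q^m:\mathrm{ev}_\alpha=\beta\}|$ counts the points realizing a given functional $\beta\in V^{*}$. Expanding $\mathbf{1}_\mathcal{L}$ in this eigenbasis and using Parseval ($\sum_\ell|a_\ell|^2=|\mathcal{L}|/q^{\dim V}$), summing over $c\in\mathbb{F}_q^*$ turns $E$ into $q^{2\dim V-1}\sum_{\ell}|a_\ell|^2\sum_{s\in\mathbb{F}_q^*}n_{s\ell}$. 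So it remains to control, for each $\ell$, the number of points whose evaluation functional lies on the punctured line $\mathbb{F}_q^*\ell$.

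This is where Property $(*)$ enters, and it is the main obstacle: naively bounding each character term by the largest eigenvalue $q^{\dim V}$ and summing over the $q-1$ characters loses a full factor of $q$, so the cancellation coming from the Fourier support of $\mathbf{1}_\mathcal{L}$ must be used. I would extract two structural facts. First, $(0,\dots,0)\in\mathcal{I}$ means the constant $1$ lies in $V$, so $\mathrm{ev}_\alpha(1)=1\ne0$ for every $\alpha$; hence $n_0=0$ and the $\ell=0$ term drops out entirely. Second, the monomials $x_i^{k_i}$ with $\gcd(k_i,q-1)=1$ lie in $V$, and since $t\mapsto t^{k_i}$ permutes $\mathbb{F}_q$, agreement of $\mathrm{ev}_\alpha$ and $\mathrm{ev}_{\alpha'}$ on these monomials forces $\alpha=\alpha'$; thus $\alpha\mapsto\mathrm{ev}_\alpha$ is injective. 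Combining the two: if $\mathrm{ev}_\alpha$ and $\mathrm{ev}_{\alpha'}$ are nonzero scalar multiples of one another, evaluating at $1$ forces the scalar to be $1$, and injectivity then forces $\alpha=\alpha'$. Consequently $\sum_{s\in\mathbb{F}_q^*}n_{s\ell}\le1$ for every $\ell\ne0$. Plugging this bound back yields $E\le q^{2\dim V-1}\cdot(|\mathcal{L}|/q^{\dim V})=q^{\dim V-1}|\mathcal{L}|$, which completes the upper bound and hence the theorem.
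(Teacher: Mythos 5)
Your proof is correct; I checked each step (the variance identity, the orthogonality-over-$t$ computation giving $E=\tfrac{1}{q}\sum_{c\in\mathbb{F}_q^*}\sum_\alpha|S_c(\alpha)|^2$, the eigenvalue formula $q^{\dim V}n_{c^{-1}\ell}$ for the convolution matrices $A_c$, and the final Parseval bound), and they all go through. The route differs from the paper's mainly in packaging rather than substance. The paper works with the single Cayley color graph $\operatorname{Cay}(V,\nos_q)$ whose edge weights are $\nos_q(f-f')$ itself, proves a general expander mixing lemma for abelian Cayley color graphs (\cref{lem:expander-mixing-Cayley}), determines the full spectrum of that graph (\cref{thm:spectrum}), and then gets the upper bound as the diagonal case $\mathcal{L}=\mathcal{L}'$ of the cross version \cref{thm:cross-version}; the lower bound is the same Cauchy--Schwarz computation you give. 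You instead expand $\nos_q$ through additive characters of $\mathbb{F}_q$ first (the $c$-sum), which fuses the mixing lemma and the spectrum computation into a single direct Parseval estimate, but only for the diagonal case. Crucially, your key structural claim --- that $n_0=0$ and that for $\ell\neq 0$ at most one pair $(s,\alpha)\in\mathbb{F}_q^*\times\mathbb{F}_q^m$ satisfies $\mathrm{ev}_\alpha=s\ell$ --- is exactly the content of parts 2) and 3) of the paper's key technical lemma (\cref{lem:vanishing-subspace}), proved there by the identical use of property $(*)$: the constant monomial pins down the scalar, and the monomials $x_i^{k_i}$ with $\gcd(k_i,q-1)=1$ pin down the point. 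So the two arguments share the same engine; yours is more self-contained and economical for this particular theorem (no general mixing lemma, no multiplicity bookkeeping), while the paper's machinery buys reusable components: the cross bound of \cref{thm:cross-version} for $\mathcal{L}\neq\mathcal{L}'$ (your argument would need an additional Cauchy--Schwarz over the Fourier coefficients of $1_{\mathcal{L}}$ and $1_{\mathcal{L}'}$ to recover it) and the complete spectrum with multiplicities, which the paper also states as a result of independent interest.
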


Lastly, we present our most general incidence theorem, which is a cross version of \eqref{eq:subpace-incidence-bound-multi}.

\begin{theorem}\label{thm:cross-version}
    Let $V$ be a separating subspace of $V_{m,r}$. Then for every $\mathcal{L},\mathcal{L}'\subseteq V$, we have
    \begin{equation}\label{eq:cross-version}
        \left|\sum_{f\in\mathcal{L},f'\in\mathcal{L'}}\nos_q(f-f')-q^{m-1}|\mathcal{L}||\mathcal{L'}|\right|\leq q^{\dim V-1}\sqrt{|\mathcal{L}|\mathcal{|L'|}}.
    \end{equation}
\end{theorem}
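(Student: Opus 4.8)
The plan is to read the left-hand side of \eqref{eq:cross-version} as a weighted edge count in a Cayley (color) graph on the additive group of $V$, and to control its deviation from the mean through a Fourier/eigenvalue estimate. Fix a nontrivial additive character $\psi$ of $\mathbb{F}_q$ and use orthogonality in the form $\mathbf 1[x=0]=\frac1q\sum_{t\in\mathbb{F}_q}\psi(tx)$. Since evaluation at a point is $\mathbb{F}_q$-linear on $V$, for any $h\in V$ one has $\nos_q(h)=q^{m-1}+\frac1q\sum_{t\in\mathbb{F}_q^{*}}\sum_{\alpha\in\mathbb{F}_q^m}\psi(t\,h(\alpha))$, where the $t=0$ term produces the main value $q^{m-1}$. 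Summing over $f\in\mathcal L$, $f'\in\mathcal L'$ with $h=f-f'$, the main term contributes exactly $q^{m-1}|\mathcal L||\mathcal L'|$, so the quantity to be bounded is the ``error'' $\frac1q\sum_{t\ne0}\sum_\alpha A(t,\alpha)\overline{B(t,\alpha)}$, where $A(t,\alpha)=\sum_{f\in\mathcal L}\psi(tf(\alpha))$ and $B(t,\alpha)=\sum_{f'\in\mathcal L'}\psi(tf'(\alpha))$ are the Fourier coefficients of $\mathcal L,\mathcal L'$ along the characters $h\mapsto\psi(t\,h(\alpha))$ of $(V,+)$.

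The key step is a Cauchy--Schwarz (polarization) argument reducing the cross statement to the diagonal statement already in hand. Applying Cauchy--Schwarz over the index set $(t,\alpha)\in\mathbb{F}_q^{*}\times\mathbb{F}_q^m$ bounds the error by $\frac1q\bigl(\sum_{t,\alpha}|A|^2\bigr)^{1/2}\bigl(\sum_{t,\alpha}|B|^2\bigr)^{1/2}$, so it suffices to control each diagonal Fourier energy. Expanding $|A(t,\alpha)|^2=\sum_{f,g\in\mathcal L}\psi(t(f-g)(\alpha))$ and using $\sum_{t\ne0}\psi(ty)=q\,\mathbf 1[y=0]-1$ yields the clean identity $\sum_{t\ne0}\sum_\alpha|A(t,\alpha)|^2=q\sum_{f,g\in\mathcal L}\nos_q(f-g)-q^m|\mathcal L|^2$. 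This is where \cref{thm:subpace-incidence-bound-multi-poly} enters: its upper bound $\sum_{f,g\in\mathcal L}\nos_q(f-g)\le q^{m-1}|\mathcal L|^2+q^{\dim V-1}|\mathcal L|$ cancels the $q^m|\mathcal L|^2$ terms and leaves $\sum_{t\ne0}\sum_\alpha|A(t,\alpha)|^2\le q^{\dim V}|\mathcal L|$, and symmetrically $\le q^{\dim V}|\mathcal L'|$ for $B$.

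Assembling the two bounds gives $\bigl|\sum_{f\in\mathcal L,f'\in\mathcal L'}\nos_q(f-f')-q^{m-1}|\mathcal L||\mathcal L'|\bigr|\le\frac1q\sqrt{q^{\dim V}|\mathcal L|}\,\sqrt{q^{\dim V}|\mathcal L'|}=q^{\dim V-1}\sqrt{|\mathcal L||\mathcal L'|}$, which is exactly \eqref{eq:cross-version}. Note that only the \emph{upper} half of \cref{thm:subpace-incidence-bound-multi-poly} is needed, applied separately to $\mathcal L$ and $\mathcal L'$, and the final constants match on the nose.

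Conceptually this is precisely the expander mixing lemma for the abelian Cayley color graph on $(V,+)$ whose edge weights record the number of agreement points $\nos_q(h)$: its nontrivial eigenvalues are the Fourier coefficients of $h\mapsto\nos_q(h)$, and the energy identity above says these are bounded by $q^{\dim V-1}$ exactly when the diagonal bound holds. I expect the only genuine obstacle to sit inside that diagonal bound, where property~$(*)$ is essential: the eigenvalue estimate rests on the evaluation functionals $\{\mathrm{ev}_\alpha\}_{\alpha\in\mathbb{F}_q^m}$ being pairwise non-proportional on $V$, which property~$(*)$ guarantees — the constant monomial forces the proportionality scalar to be $1$, and the monomials $x_i^{k_i}$ with $\gcd(k_i,q-1)=1$ (so that $x\mapsto x^{k_i}$ permutes $\mathbb{F}_q$) force $\alpha=\beta$ whenever $\mathrm{ev}_\alpha=\mathrm{ev}_\beta$. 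The cross step itself, by contrast, is the routine Cauchy--Schwarz above and introduces no new difficulty.
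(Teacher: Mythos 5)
Your Fourier set-up is correct in every computational detail --- the decomposition $\nos_q(h)=q^{m-1}+\frac1q\sum_{t\neq 0}\sum_\alpha\psi(t\,h(\alpha))$, the identity $\sum_{t\neq0}\sum_\alpha|A(t,\alpha)|^2=q\sum_{f,g\in\mathcal L}\nos_q(f-g)-q^m|\mathcal L|^2$, and the Cauchy--Schwarz assembly all check out --- but as written the argument is circular. You bound the Fourier energies by invoking the upper bound of \cref{thm:subpace-incidence-bound-multi-poly}; however, that upper bound is precisely the special case $\mathcal L'=\mathcal L$ of \cref{thm:cross-version}, the statement you are proving, and in the paper it is obtained \emph{only} as a corollary of \cref{thm:cross-version} --- it has no independent proof anywhere. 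So your proposal reduces the general statement to a special case of itself and then defers that special case to a result whose only proof runs through the general statement. Your final paragraph, which correctly locates where property $(*)$ must act (pairwise non-proportionality of the functionals $t\cdot\mathrm{ev}_\alpha$ on $V$), is a sketch of the missing ingredient, not a proof of it; as it stands, this is a genuine gap.

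The gap is fixable, and the fix removes the detour through the diagonal incidence bound altogether. Property $(*)$ guarantees --- this is exactly the paper's \cref{lem:vanishing-subspace} --- that the maps $\chi_{t,\alpha}\colon h\mapsto\psi(t\,h(\alpha))$, for $(t,\alpha)\in\mathbb{F}_q^{*}\times\mathbb{F}_q^{m}$, are $(q-1)q^m$ pairwise distinct characters of the additive group $V$: for fixed $\alpha$, distinctness in $t$ follows by evaluating at the constant monomial $1\in V$, and for $\alpha\neq\beta$ distinctness follows by evaluating at $x_i^{k_i}$ with $\gcd(k_i,q-1)=1$, just as you sketch. Since $A(t,\alpha)$ is the Fourier coefficient of $1_{\mathcal L}$ at the character $\overline{\chi_{t,\alpha}}$, and these characters are pairwise distinct, the Plancherel formula gives directly
\begin{equation*}
\sum_{t\neq 0}\sum_{\alpha\in\mathbb{F}_q^m}\bigl|A(t,\alpha)\bigr|^2\;\le\;\sum_{\chi\in\widehat{V}}\bigl|\widehat{1_{\mathcal L}}(\chi)\bigr|^2\;=\;|V|\,|\mathcal L|\;=\;q^{\dim V}|\mathcal L|,
\end{equation*}
and likewise for $B$ with $|\mathcal L'|$; your Cauchy--Schwarz step then yields \eqref{eq:cross-version} with the exact constants, and \cref{thm:subpace-incidence-bound-multi-poly} follows afterwards as in the paper. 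With this repair your argument is sound and is essentially the paper's proof unrolled: the paper proves the general mixing lemma (\cref{lem:expander-mixing-Cayley}) by the same Plancherel computation and feeds into it the spectrum determined in \cref{thm:spectrum}, whereas you would specialize the Plancherel argument to the characters $\chi_{t,\alpha}$ and never need the full spectrum of $\nos_q$ --- marginally more economical, but the same mechanism, with property $(*)$ playing the identical role in both.
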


\noindent Clearly, the upper bound in \eqref{eq:subpace-incidence-bound-multi} follows from \eqref{eq:cross-version} by setting $\mathcal{L}=\mathcal{L}'$.

\paragraph{Outline of the proof.} To prove \cref{thm:cross-version}, we introduce a natural yet previously unexplored connection between incidence problems and a specially defined Cayley color graph, which we refer to as the {\it polynomial incidence graph} (see Definition~\ref{def:Cay(V_m,r,N_q)}). 

The vertex set of this graph is $V_{m,r}$ and each directed edge $(f,f')\in V_{m,r}\times V_{m,r}$ is assigned a color corresponding to the number of incidences $\nos_q(f-f')$. By interpreting incidences between polynomials as edge colors, we bound the number of incidences between two sets of polynomials via the total weight of the colored edges (see Definition~\ref{def:e(S,T)}). 

We then prove an expander mixing lemma for general abelian Cayley color graphs (see Lemma~\ref{lem:expander-mixing-Cayley}), which estimates the total weight of colored edges between two vertex subsets via the graph spectrum. This result generalizes the classical expander mixing lemma by Alon and Chung  \cite{alon1994explicit} for ordinary graphs. 

The key technical difficulty of this work is the complete determination of the spectrum of the polynomial incidence graph using discrete Fourier analysis over finite fields. An immediate application of the expander mixing lemma then yields our main result Theorem~\ref{thm:cross-version}, from which both Theorem~\ref{thm:subpace-incidence-bound-multi-poly} and Theorem~\ref{prop:incidence-bound-multi-poly} follow as straightforward corollaries. 

Lastly, motivated by the work of Murphy and Petridis \cite{murphy2016point}, we use a second moment argument to bound point-polynomial incidences via polynomial-polynomial incidences and prove Theorem~\ref{thm:subspace-point-multi-poly}. 

At a high level, our proof adopts a spectral approach similar to those in \cite{vinh2011szemeredi} and \cite{tamo2025points}. However, compared with \cite{vinh2011szemeredi} and \cite{tamo2025points}, our proof has three new ingredients. First, rather than working with the point-polynomial incidence graph, we move a step further by considering the polynomial incidence graph, which allows us to prove incidence bounds for two sets of multivariate polynomials. Second, to establish our main result, we develop an expander mixing lemma for abelian Cayley color graphs, which is of independent interest and may have further applications. 
Third, we obtain an improved error term when the polynomials in $\mathcal{L}$ are chosen from a separating subspace of $V_{m,r}$ (see Definition~\ref{def:sep-subspace}).

\paragraph{Organization.} The remaining part of this paper is organized as follows. In \cref{sec:2}, we introduce Cayley color graphs and formally define the polynomial incidence graph. We also discuss its connection to incidence problems. We then present an expander mixing lemma for abelian Cayley color graphs (see Lemma~\ref{lem:expander-mixing-Cayley}) and characterize the spectrum of the polynomial incidence graph (see \cref{thm:spectrum}). In \cref{sec:3}, after reviewing some basics of representation theory and Fourier analysis over finite fields, we present the proofs of Lemma~\ref{lem:expander-mixing-Cayley} and Theorem~\ref{thm:spectrum}. In \cref{sec:4}, we present the proofs of our main results, Theorems \ref{thm:cross-version} and \ref{thm:subpace-incidence-bound-multi-poly}.
Finally, in \cref{sec:5}, we mention several open problems for future research.

\section{Cayley color graphs and its connection with incidence theorems}
\label{sec:2}

\noindent Originally introduced by Arthur Cayley \cite{cayley1878desiderata}, Cayley graphs have become fundamental structures in modern graph theory. Given a group $G$ and a subset $S \subseteq G$, the {\it Cayley graph} of $G$ with respect to the {\it connection set} $S$, denoted by $\operatorname{Cay}(G, S)$, is the graph with vertex set $G$ where two vertices $x$ and $y$ are adjacent if and only if $xy^{-1}\in S$. Cayley graphs have numerous applications in combinatorics and theoretical computer science. To name a few, they are used in the deterministic constructions of pseudorandom clique-free graphs \cite{alon1994explicit,bishnoi2020construction} and Ramanujan graphs \cite{lubotzky1988ramanujan}.

As a generalization of Cayley graphs, to the best of our knowledge, Cayley color graphs were first formally defined by Babai \cite{babai1979spectra} in 1979. 


\begin{definition}[Cayley color graphs]\label{def:Cayley}
    Let $G$ be a finite group and $c:G\to\mathbb{C}$ a function. The {\it Cayley color graph} of $G$ with {\it connection function} $c$, denoted by $\operatorname{Cay}(G,c)$, is the directed graph with vertex set $G$ and edge set $G\times G$, where each directed edge $(x,y)$ is colored by $c(xy^{-1})$. The {\it adjacency matrix} of $\operatorname{Cay}(G,c)$ is the matrix with rows and columns indexed by the elements of $G$, whose $(x,y)$-entry is $c(xy^{-1})$.
\end{definition}

Building on the work of Lov\'asz \cite{lovasz1975spectra}, Babai \cite{babai1979spectra} derived an expression for the spectrum of the Cayley color graph $\operatorname{Cay}(G, c)$ in terms of the irreducible characters of the group $G$.

To incorporate Cayley color graphs with incidence bounds, below we define the {\it weight} of {\it colored edges} in a Cayley color graph. 

\begin{definition}[Weight of colored edges]\label{def:e(S,T)}
    Let $\operatorname{Cay}(G,c)$ be a Cayley color graph. Then for two subsets $S,T\subseteq G$, define the {\it weight} of {\it colored edges} directed from $S$ to $T$ as $e_c(S,T)=\sum_{(x,y)\in S\times T}c(xy^{-1})$.
\end{definition}

Next, we define a special family of Cayley color graphs, which we call the {\it polynomial incidence graph}.

\begin{definition}[Polynomial incidence graph]\label{def:Cay(V_m,r,N_q)}
    Let $\operatorname{Cay}(V_{m,r},\nos_q)$ be the Cayley color graph defined on the additive group $V_{m,r}$ with connection function $\nos_q:V_{m,r}\rightarrow\mathbb{N}$, defined as for each $f\in V$, $\nos_q(f)=|\{\alpha\in\mathbb{F}_q^m:f(\alpha)=0\}|$. 
    $\operatorname{Cay}(V_{m,r},\nos_q)$ is an undirected complete graph, where each edge $\{f,f'\}\subseteq V$ is colored by $\nos_q(f-f')$.
    The {\it adjacency matrix} of $\operatorname{Cay}(V_{m,r},\nos_q)$ is a symmetric matrix with rows and columns indexed by the elements of $V_{m,r}$, whose $(f,f')$-entry is $\nos_q(f-f')$.
\end{definition}

We have the following crucial observation, which connects Cayley color graphs to incidence theorems.

\begin{fact}\label{fact:crucial-equality}
    By Definition~\ref{def:e(S,T)}, for every $\mathcal{L},\mathcal{L}'\subseteq V_{m,r}$, $e_{\nos_q}(\mathcal{L},\mathcal{L}')=\sum_{f\in\mathcal{L},f'\in\mathcal{L'}}\nos_q(f-f')$.
\end{fact}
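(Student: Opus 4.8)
The plan is to obtain the stated identity by directly unwinding Definition~\ref{def:e(S,T)}, specialized to the polynomial incidence graph $\operatorname{Cay}(V_{m,r},\nos_q)$ of Definition~\ref{def:Cay(V_m,r,N_q)}. The claim asserts nothing more than that the weight of the colored edges directed from $\mathcal{L}$ to $\mathcal{L}'$ coincides with the incidence sum $\sum_{f\in\mathcal{L},f'\in\mathcal{L}'}\nos_q(f-f')$, so the whole task is to match the general definition of $e_c(S,T)$ against the concrete connection function $\nos_q$.

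Concretely, I would take $G=V_{m,r}$ (viewed as the additive group of $m$-variate polynomials of degree at most $r$), $c=\nos_q$, $S=\mathcal{L}$, and $T=\mathcal{L}'$ in Definition~\ref{def:e(S,T)}, which gives
\[
    e_{\nos_q}(\mathcal{L},\mathcal{L}')=\sum_{(f,f')\in\mathcal{L}\times\mathcal{L}'}\nos_q\!\left(f\cdot(f')^{-1}\right).
\]
The only point that requires attention is the notational convention: Definitions~\ref{def:Cayley} and~\ref{def:e(S,T)} are written multiplicatively, in terms of the group product $xy^{-1}$, whereas the underlying group of the polynomial incidence graph is the \emph{additive} group $(V_{m,r},+)$. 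Under this additive convention the inverse of $f'$ is $-f'$ and the product $f\cdot(f')^{-1}$ becomes $f-f'$, so the displayed sum is precisely $\sum_{f\in\mathcal{L},f'\in\mathcal{L}'}\nos_q(f-f')$, as claimed.

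I do not expect any genuine obstacle: the identity follows immediately from the definitions, and the only thing to check is this translation between multiplicative and additive notation, together with the elementary observation that $\nos_q(-g)=\nos_q(g)$ (which is what makes the edge-coloring, and hence the adjacency matrix, symmetric, so that the graph is indeed undirected). The real role of the identity is structural rather than computational: it is the bridge that reinterprets the polynomial--polynomial incidence sums of \cref{thm:cross-version}, \cref{thm:subpace-incidence-bound-multi-poly}, and \cref{prop:incidence-bound-multi-poly} as edge weights $e_{\nos_q}(\mathcal{L},\mathcal{L}')$ in a Cayley color graph, so that the expander mixing lemma \cref{lem:expander-mixing-Cayley} and the spectrum computation \cref{thm:spectrum} can afterwards be applied.
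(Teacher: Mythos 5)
Your proposal is correct and matches the paper exactly: \cref{fact:crucial-equality} is purely definitional, and the paper likewise obtains it by specializing Definition~\ref{def:e(S,T)} to $G=V_{m,r}$, $c=\nos_q$, $S=\mathcal{L}$, $T=\mathcal{L}'$, with $xy^{-1}$ read additively as $f-f'$. Your side remark that $\nos_q(-g)=\nos_q(g)$ (justifying the undirectedness in Definition~\ref{def:Cay(V_m,r,N_q)}) is a correct bonus, though not needed for the identity itself.
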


Therefore, to prove \eqref{eq:cross-version} (and all the other incidence bounds in this paper), it suffices to prove corresponding upper and lower bounds on $e_{\nos_q}(\mathcal{L},\mathcal{L}')$. Such a bound has been proved for every ordinary regular graph with large spectral gap. The celebrated {\it expander mixing lemma} (see \cite[Lemma 2.3]{alon1988explicit}) showed that for every $d$-regular $n$-vertex graph $H$, if the absolute values of any other eigenvalues of $H$ except $d$ are at most $\lambda$, then for every $S,T\subseteq V(H)$,
\begin{equation}\label{eq:expander-mixing}
    \left|e(S,T)-\frac{d}{n}|S||T|\right|\leq\lambda\sqrt{|S||T|\left(1-\frac{|S|}{n}\right)\left(1-\frac{|T|}{n}\right)}.
\end{equation}

Fact~\ref{fact:crucial-equality} motivates us to prove an expander mixing lemma for Cayley color graphs, as stated below.

\begin{lemma}[Expander mixing lemma for abelian Cayley color graphs]\label{lem:expander-mixing-Cayley}
    Let $G$ be a finite abelian group and $c:G\to\mathbb{C}$ be a function. Let $\Gamma=\operatorname{Cay}(G,c)$ be the Cayley color graph with vertex set $G$ and connection function $c$. Then for every $S, T\subseteq V(\Gamma)$, 
    \begin{equation}\label{eq:expander-mixing-Cayley}
        \left|e_c(S,T)-\frac{1}{|G|}\sum_{g\in G}c(g)|S||T|\right|\leq\lambda\sqrt{|S||T|\left(1-\frac{|S|}{|G|}\right)\left(1-\frac{|T|}{|G|}\right)},
    \end{equation}
    
    \noindent where $\lambda=\max\{|\widehat{c}(\chi)|:\chi\in\widehat{G},\chi\neq\chi_0\}$ is the maximum absolute value of the non-trivial Fourier coefficients of $c$.
\end{lemma}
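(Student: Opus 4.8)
The plan is to follow the spectral approach behind the classical expander mixing lemma \eqref{eq:expander-mixing}, transported to the color-weighted setting via discrete Fourier analysis on the abelian group $G$. Write $A$ for the adjacency matrix of $\Gamma$, so that $A_{x,y}=c(xy^{-1})$, and note that $e_c(S,T)=\mathbf{1}_S^{*}A\mathbf{1}_T$, where $\mathbf{1}_S,\mathbf{1}_T\in\mathbb{C}^{G}$ are the (real) indicator vectors of $S$ and $T$. The structural fact I would exploit is that, since $G$ is abelian, every irreducible character $\chi\in\widehat{G}$ is one-dimensional, and the vector $v_\chi=(\chi(x))_{x\in G}$ is an eigenvector of $A$: the substitution $g=xy^{-1}$ gives $(Av_\chi)(x)=\sum_{y}c(xy^{-1})\chi(y)=\bigl(\sum_{g}c(g)\overline{\chi(g)}\bigr)\chi(x)=\widehat{c}(\chi)\,v_\chi(x)$, so the eigenvalue attached to $\chi$ is precisely the Fourier coefficient $\widehat{c}(\chi)$. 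Because the $v_\chi$ form an orthogonal basis of $\mathbb{C}^{G}$ with $\langle v_\chi,v_{\chi'}\rangle=|G|\,\delta_{\chi,\chi'}$, the matrix $A$ is diagonalized by this (rescaled) orthonormal character basis, even though $A$ need not be Hermitian.

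Using this diagonalization I would expand the indicators in the character basis and obtain
\begin{equation*}
e_c(S,T)=\mathbf{1}_S^{*}A\mathbf{1}_T=\frac{1}{|G|}\sum_{\chi\in\widehat{G}}\widehat{c}(\chi)\,\widehat{S}(\chi)\,\overline{\widehat{T}(\chi)},\qquad\text{where }\widehat{S}(\chi)=\sum_{x\in S}\chi(x),\ \widehat{T}(\chi)=\sum_{y\in T}\chi(y).
\end{equation*}
The contribution of the trivial character $\chi_0$ equals $\tfrac{1}{|G|}\bigl(\sum_{g}c(g)\bigr)|S||T|$, which is exactly the main term in \eqref{eq:expander-mixing-Cayley}. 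Subtracting it leaves the error as the sum over $\chi\neq\chi_0$, and the triangle inequality together with $|\widehat{c}(\chi)|\le\lambda$ yields
\begin{equation*}
\Bigl|e_c(S,T)-\tfrac{1}{|G|}\bigl(\textstyle\sum_{g}c(g)\bigr)|S||T|\Bigr|\le\frac{\lambda}{|G|}\sum_{\chi\neq\chi_0}\bigl|\widehat{S}(\chi)\bigr|\,\bigl|\widehat{T}(\chi)\bigr|.
\end{equation*}

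To finish I would apply Cauchy--Schwarz to this last sum and evaluate the two $\ell^2$-norms by Parseval: the orthogonality relation $\sum_{\chi}\chi(h)=|G|\,[h=e]$ gives $\sum_{\chi}|\widehat{S}(\chi)|^2=|G|\,|S|$, hence $\sum_{\chi\neq\chi_0}|\widehat{S}(\chi)|^2=|G|\,|S|-|S|^2=|G|\,|S|\bigl(1-\tfrac{|S|}{|G|}\bigr)$, and likewise for $T$. Substituting these and simplifying the prefactor $|G|^{-1}\sqrt{|G|\cdot|G|}=1$ produces exactly the right-hand side of \eqref{eq:expander-mixing-Cayley}.

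The step I expect to require the most care is the very first one. Since $c$ is an arbitrary complex-valued function, the adjacency matrix $A$ is in general neither symmetric nor Hermitian, so the real-spectral-theorem machinery behind \eqref{eq:expander-mixing} does not apply directly. The resolution, and the conceptual heart of the lemma, is that the abelian hypothesis forces a common orthogonal eigenbasis (the characters) irrespective of symmetry, making $A$ a normal matrix; this is what lets the Parseval and Cauchy--Schwarz estimates go through verbatim. I would also track Fourier conventions carefully (the conjugate on $\chi$), observing that the value of $\lambda$ is insensitive to the chosen normalization because $\chi\mapsto\overline{\chi}$ permutes the non-trivial characters and fixes $\chi_0$.
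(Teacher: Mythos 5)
Your proposal is correct and follows essentially the same route as the paper's proof: both diagonalize the adjacency matrix via the character eigenbasis (the Lov\'asz--Babai fact that $A v_\chi = \widehat{c}(\chi)v_\chi$), isolate the trivial character's contribution as the main term, and bound the remainder by the triangle inequality, Cauchy--Schwarz, and Parseval, yielding the factors $|G||S|-|S|^2$ and $|G||T|-|T|^2$. The only differences are cosmetic (your Fourier convention places the conjugate on $\widehat{T}$ rather than on $\widehat{1_S}$, and you verify Parseval directly from character orthogonality where the paper cites Plancherel), and your closing remark about normality of $A$ correctly identifies why no Hermitian hypothesis is needed.
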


We have several remarks regarding \eqref{eq:expander-mixing-Cayley}. First, setting $T=\{1_G\}$, the identity element in $G$, \eqref{eq:expander-mixing-Cayley} implies that 
\begin{equation*}
    \left|\frac{1}{|S|}\sum_{x\in S} c(x)-\frac{1}{|G|}\sum_{g\in G}c(g)\right|\le\frac{\lambda}{\sqrt{|S|}},
\end{equation*}

\noindent which gives an upper bound on the deviation between $\mathbb{E}_{g\sim S}[c(x)]$ and $\mathbb{E}_{g\sim G}[c(x)]$. Second, by considering the trace of the square of the adjacency matrix of $\operatorname{Cay}(G,c)$, we give a lower bound $\lambda\geq\sqrt{|G|\operatorname{Var}_{g\sim G}|c(g)|}$ (see concluding remarks for details). 

To prove the desired lower and upper bounds on  $\sum_{f\in\mathcal{L},f'\in\mathcal{L'}}\nos_q(f-f')$ via Fact~\ref{fact:crucial-equality} and \eqref{eq:expander-mixing-Cayley}, we determine the spectrum of the polynomial incidence graph $\operatorname{Cay}(V_{m,r},\nos_q)$, as shown by the following result. Note that $p$ is the characteristic of the finite field $\mathbb{F}_q$ and $\mathbb{F}_q^\ast =\mathbb{F}_q\setminus\{0\}$; $\zeta_p=e^{\frac{2\pi i}{p}}$ is the $p$-th root of unity; $\operatorname{Tr}$ is the trace function from $\mathbb{F}_q$ to $\mathbb{F}_p$ (see \cref{subsec:pre} below for details).

\begin{theorem}[Spectrum of the polynomial incidence graph]\label{thm:spectrum}
    Let $V$ be a separating subspace of $\mathbb{F}_q[x_1,\ldots,x_m]$ as defined in Definition~\ref{def:sep-subspace}, and $\Gamma=\operatorname{Cay}(V,\nos_q)$ be the polynomial incidence graph. Then the eigenvalues of $\Gamma$ are $q^{\dim V+m-1}, q^{\dim V-1}$ and $0$, with multiplicities $1$, $(q-1)q^m$, and $|V|-(q-1)q^m-1$, respectively. Moreover, the eigenspace corresponding to $q^{\dim V+m-1}$ is spanned by the all one vector and the eigenspace corresponding to $q^{\dim V-1}$ is spanned by $\{(\zeta_p^{\operatorname{Tr}(Cf(\alpha))}:f\in V):C\in\mathbb{F}_q^\ast, \alpha\in\mathbb{F}_q^m\}$.
\end{theorem}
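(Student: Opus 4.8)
The goal is to determine the spectrum of the polynomial incidence graph $\Gamma = \operatorname{Cay}(V, \nos_q)$, where $V$ is a subspace with property $(*)$. The plan is to leverage the fact that $\Gamma$ is an \emph{abelian} Cayley color graph, so by the work of Babai (or directly via Fourier analysis on the additive group $V$), its eigenvalues are precisely the Fourier coefficients $\widehat{\nos_q}(\chi)$ of the connection function $\nos_q$, indexed by characters $\chi \in \widehat{V}$. Thus the entire problem reduces to computing these Fourier coefficients and grouping the characters according to the value they produce.

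First I would set up the Fourier transform. Since $V$ is a finite-dimensional $\mathbb{F}_q$-vector space, its additive characters are of the form $\chi_h(f) = \zeta_p^{\operatorname{Tr}(B(h,f))}$ for a suitable nondegenerate pairing $B$, or more concretely, after fixing the monomial basis indexed by $\mathcal{I}$, each character is determined by a tuple of coefficients. The eigenvalue attached to $\chi$ is
\begin{equation*}
    \widehat{\nos_q}(\chi) = \sum_{f \in V} \nos_q(f)\, \overline{\chi(f)}.
\end{equation*}
I would then expand $\nos_q(f) = \sum_{\alpha \in \mathbb{F}_q^m} \mathbf{1}[f(\alpha) = 0]$ and, crucially, rewrite this indicator via the standard orthogonality identity $\mathbf{1}[f(\alpha)=0] = \frac{1}{q}\sum_{C \in \mathbb{F}_q} \zeta_p^{\operatorname{Tr}(C f(\alpha))}$. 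Substituting and swapping the order of summation, the eigenvalue becomes a sum over $\alpha \in \mathbb{F}_q^m$ and $C \in \mathbb{F}_q$ of an inner sum $\sum_{f \in V} \zeta_p^{\operatorname{Tr}(C f(\alpha))}\,\overline{\chi(f)}$. Because the map $f \mapsto \zeta_p^{\operatorname{Tr}(Cf(\alpha))}$ is itself an additive character of $V$, this inner sum is an orthogonality relation that vanishes unless that character equals $\chi$, in which case it contributes $|V|$.

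This reduces the computation to counting, for each character $\chi$, the number of pairs $(C, \alpha)$ for which the character $f \mapsto \zeta_p^{\operatorname{Tr}(Cf(\alpha))}$ coincides with $\chi$. The $C = 0$ term always produces the trivial character and contributes to the all-ones eigenvector with eigenvalue $q^m \cdot \frac{1}{q} \cdot |V| = q^{\dim V + m -1}$, matching the claimed top eigenvalue. For the nontrivial eigenvalue $q^{\dim V - 1}$, I expect to show that each nontrivial character of the prescribed form $(C,\alpha) \mapsto (\zeta_p^{\operatorname{Tr}(Cf(\alpha))})_{f}$ arises from exactly one pair $(C,\alpha)$ with $C \in \mathbb{F}_q^\ast$, so that the eigenvalue is $\frac{1}{q}|V| = q^{\dim V -1}$; the multiplicity $(q-1)q^m$ then counts these distinct characters, and all remaining characters give Fourier coefficient $0$. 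Here is precisely where property $(*)$ enters and where the main obstacle lies: I must verify that the assignment $(C, \alpha) \mapsto \chi_{C,\alpha}$ is \emph{injective} on $\mathbb{F}_q^\ast \times \mathbb{F}_q^m$. Injectivity is what guarantees these characters are genuinely distinct (so the listed eigenvectors are linearly independent and the multiplicity is exactly $(q-1)q^m$) and that no pair is ever counted twice (so the eigenvalue is exactly $q^{\dim V-1}$ rather than a larger multiple of $q^{\dim V - 2}$).

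The hard part will be proving this injectivity, and this is exactly what the two conditions in property $(*)$ are engineered to deliver. Two pairs $(C,\alpha)$ and $(C',\alpha')$ give the same character iff $Cf(\alpha) = C'f(\alpha')$ for all $f \in V$, i.e.\ the linear functionals agree on every basis monomial $x_1^{i_1}\cdots x_m^{i_m}$ with $(i_1,\dots,i_m)\in\mathcal{I}$. Testing on the constant monomial $(0,\dots,0)\in\mathcal{I}$ forces $C = C'$; testing on the pure-power monomials $(0,\dots,k_j,\dots,0)\in\mathcal{I}$ then forces $\alpha_j^{k_j} = (\alpha_j')^{k_j}$ for each coordinate $j$. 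The condition $\gcd(k_j, q-1) = 1$ is exactly what makes the map $t \mapsto t^{k_j}$ a bijection on $\mathbb{F}_q$ (it permutes $\mathbb{F}_q^\ast$ and fixes $0$), hence $\alpha_j = \alpha_j'$ for every $j$, giving $\alpha = \alpha'$ and proving injectivity. I would then conclude by a dimension count: the top eigenspace (dimension $1$) plus the $q^{\dim V-1}$-eigenspace (dimension $(q-1)q^m$, spanned by the stated linearly independent vectors) account for $1 + (q-1)q^m$ dimensions, and since the sum of all Fourier coefficients is captured by the trace and the remaining characters contribute $0$, the zero eigenvalue has multiplicity $|V| - (q-1)q^m - 1$, completing the description.
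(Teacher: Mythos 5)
Your proposal is correct and follows essentially the same route as the paper's proof: the eigenvalues are the Fourier coefficients of $\nos_q$ via Babai's lemma, and property $(*)$ is used exactly as in the paper's key technical lemma --- the constant monomial forces $C=C'$ and the pure powers $x_j^{k_j}$ with $\gcd(k_j,q-1)=1$ force $\alpha=\alpha'$ --- to show that the characters $f\mapsto\zeta_p^{\operatorname{Tr}(Cf(\alpha))}$ for $(C,\alpha)\in\mathbb{F}_q^\ast\times\mathbb{F}_q^m$ are pairwise distinct, which pins down both the eigenvalue $q^{\dim V-1}$ and its multiplicity $(q-1)q^m$. The only difference is bookkeeping: you expand the indicator $\mathbf{1}[f(\alpha)=0]=\frac{1}{q}\sum_{C\in\mathbb{F}_q}\zeta_p^{\operatorname{Tr}(Cf(\alpha))}$ and apply orthogonality over all of $V$, whereas the paper sums over the vanishing subspaces $V_\alpha$ and identifies their annihilators $V_\alpha^{\perp}$ (using the annihilator-order fact), but the two computations have identical mathematical content.
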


The proof of \cref{thm:spectrum} is the main technical part of this paper. As we will see in the next section, the eigenvalues of $\operatorname{Cay}(V,\nos_q)$ can be expressed as character sums over $V_{\alpha}$, where for each $\alpha\in\mathbb{F}_q^m$, $V_{\alpha}$ consists of all polynomials $f\in V$ vanishing at $\alpha$. By the first orthogonality relation of characters, this reduces to the determination of the annihilators of $V_{\alpha}$ for all $\alpha\in\mathbb{F}_q^n$. The reader is referred to \cref{subsec:pre} and \cref{subsec:spectrum} for details.


\section{Spectral bounds for Cayley color graphs}
\label{sec:3}

\noindent The goal of this section is to present the proofs of Lemma~\ref{lem:expander-mixing-Cayley} and Theorem~\ref{thm:spectrum}. 

\subsection{Preliminaries}\label{subsec:pre}

\noindent We begin by introducing some basic concepts from the representation theory of finite abelian groups (for details, see, e.g. \cite{steinberg2012representation}).

Let $G$ be a finite abelian group written multiplicatively, and let $1_G$ denote the identity element of $G$. A {\it character} of $G$ is a homomorphism from $G$ to the multiplicative group of complex numbers $\mathbb{C}^*$; that is, a map $\chi:G\to\mathbb{C}^*$ such that $\chi(gh)=\chi(g)\chi(h)$ for all $g,h\in G$. Since $g^{|G|}=1_G$ for all $g\in G$, it follows that $\chi(g)^{|G|}=\chi(g^{|G|})=\chi(1_G)=1$. Hence, the values of $\chi$ are $|G|$-th roots of unity. In particular, $\chi(g^{-1})=\chi(g)^{-1}=\overline{\chi(g)}$, where the bar denotes complex conjugation. 

Let $\widehat{G}$ denote the set of all characters of $G$, called the {\it dual} of $G$. The set $\widehat{G}$ forms an abelian group under pointwise multiplication; that is, for $\chi, \psi \in \widehat{G}$, define $(\chi \psi)(g) = \chi(g) \psi(g)$ for all $g \in G$. The identity element of $\widehat{G}$ is the {\it trivial character}, denoted by $\chi_0$, defined by $\chi_0(g) = 1$ for all $g \in G$. The inverse of $\chi \in \widehat{G}$ is given by the {\it conjugate character} $\overline{\chi}$, which is defined by $\overline{\chi}(g)=\overline{\chi(g)}$ for all $g\in G$. An important property of finite abelian groups is {\it self-duality}, which states that $\widehat{G} \cong G$. Hence, we can label the characters of $G$ as $\widehat{G} = \{\chi_g : g \in G\}$.

Next, let us recall the {\it additive characters} of vector spaces over finite fields. Most of the results can be found in \cite{lidl1997finite}.

\begin{example}[Additive characters of finite fields]\label{eg:vector-space-dual-group}
    Let $d,s$ be positive integers, $p$ be a prime, and $q=p^s$. Let $\mathbb{F}_q$ be the finite field of order $q$, and let $\mathbb{F}_q^d$ be the vector space of dimension $d$ over $\mathbb{F}_q$.
    
    Consider the additive group $\mathbb{F}_q^d$, the characters in $\widehat{\mathbb{F}_q^d}=\{\chi_\alpha:\alpha\in\mathbb{F}_q^d\}$ are given by 
    $\chi_\alpha(\beta)=\zeta_p^{\operatorname{Tr}(\langle\alpha,\beta\rangle)}$ for every $\beta \in \mathbb{F}_q^d$, where $\zeta_p=e^{\frac{2\pi i}{p}}$, $\operatorname{Tr}:\mathbb{F}_{q}\to\mathbb{F}_p$, defined by $\operatorname{Tr}(a)=a+a^p+\cdots+a^{p^{s-1}}$ is the trace function from $\mathbb{F}_q$ to the prime field $\mathbb{F}_p$ and $\langle \alpha,\beta\rangle:=\sum_{i=1}^d\alpha_i\beta_i$ is the inner product of two vectors in $\mathbb{F}_q^d$. 
\end{example}

Let $H$ be a subgroup of $G$. The {\it annihilator} of $H$, denoted by $H^\perp$, is the set of all characters $\chi\in\widehat{G}$ such that $\chi(h)=1$ for all $h\in H$; in other words, the restriction of $\chi$ to $H$ is the trivial character of $H$. It is well known that the structure of $H^\perp$ is given by:

\begin{fact}[Theorem~5.6 in \cite{lidl1997finite}]\label{fact:annihilator}
Let $H$ be a subgroup of the finite abelian group $G$. 
Then the annihilator of $H$ is a subgroup of $\widehat{G}$ of order $|G|/|H|$.
\end{fact}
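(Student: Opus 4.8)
The statement asserts two things: that $H^\perp$ is a subgroup of $\widehat{G}$, and that $|H^\perp| = |G|/|H|$. The subgroup claim is immediate, so I would dispense with it first: the trivial character $\chi_0$ lies in $H^\perp$, and if $\chi,\psi \in H^\perp$ then $(\chi\overline{\psi})(h) = \chi(h)\overline{\psi(h)} = 1$ for every $h \in H$, so $\chi\overline{\psi} \in H^\perp$; hence $H^\perp$ is closed under products and inverses. The substance of the fact is the cardinality count, and the plan is to identify $H^\perp$ with the character group of the quotient $G/H$.

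Concretely, I would let $\pi : G \to G/H$ be the canonical projection and consider the pullback map $\Phi : \widehat{G/H} \to \widehat{G}$ defined by $\Phi(\psi) = \psi \circ \pi$. Since $\pi(h) = 1_{G/H}$ for every $h \in H$, each character $\Phi(\psi)$ is trivial on $H$, so $\Phi$ takes values in $H^\perp$, and it is plainly a group homomorphism. It is injective because $\pi$ is surjective (if $\psi \circ \pi = \chi_0$ then $\psi \equiv 1$), and it is surjective because any $\chi \in H^\perp$ is constant on the cosets of $H$ and therefore descends, by the universal property of the quotient, to a well-defined character $\overline{\chi}$ of $G/H$ with $\chi = \Phi(\overline{\chi})$. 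Thus $\Phi$ is an isomorphism $\widehat{G/H} \cong H^\perp$. Invoking self-duality in the form $|\widehat{K}| = |K|$ for every finite abelian group $K$ (recorded above), I obtain $|H^\perp| = |\widehat{G/H}| = |G/H| = |G|/|H|$, which is the claim.

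An alternative, more computational route avoids the quotient and uses orthogonality directly: since $\chi|_H$ is a character of $H$, the first orthogonality relation applied inside $H$ gives $\sum_{h \in H} \chi(h) = |H|$ when $\chi \in H^\perp$ and $0$ otherwise, so that $\mathbf{1}[\chi \in H^\perp] = \tfrac{1}{|H|}\sum_{h\in H}\chi(h)$. Summing over all $\chi \in \widehat{G}$ and exchanging the order of summation then reduces the count to the dual orthogonality relation $\sum_{\chi \in \widehat{G}} \chi(h) = |G|\,\mathbf{1}[h = 1_G]$, which collapses the double sum to the single surviving term $h = 1_G$ and again yields $|H^\perp| = |G|/|H|$. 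Either way, the only genuine input is the identity $|\widehat{K}| = |K|$ (equivalently, the dual orthogonality relation), both of which are consequences of the self-duality of finite abelian groups already stated in the text; I do not expect any real obstacle beyond checking that the factored character $\overline{\chi}$ in the first route is well defined, i.e.\ that a character annihilating $H$ is genuinely constant on each $H$-coset.
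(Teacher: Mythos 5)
Your proof is correct, but there is nothing in the paper to compare it against: the paper does not prove this statement at all. It is imported as a known fact, cited as Theorem~5.6 of Lidl and Niederreiter's \emph{Finite Fields}, and used as a black box in the proof of Lemma~3.4 (to pin down $|V_\alpha^\perp| = q$). So your proposal supplies a proof where the paper supplies a citation. Both of your routes are sound. The first --- identifying $H^\perp$ with $\widehat{G/H}$ via the pullback $\Phi(\psi) = \psi \circ \pi$ and invoking $|\widehat{K}| = |K|$ --- is the standard textbook argument, essentially what one finds in the cited reference; the only point needing care (that $\chi \in H^\perp$ is constant on $H$-cosets and so descends to a well-defined character of $G/H$) you identify and it is immediate from $\chi(gh) = \chi(g)\chi(h) = \chi(g)$. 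The second, counting route is also fine, though note that its ``dual'' orthogonality relation $\sum_{\chi \in \widehat{G}} \chi(h) = |G|\,\mathbf{1}[h = 1_G]$ is not quite the first orthogonality relation \eqref{eq:first-orthogonality} stated in the paper: it additionally requires that characters separate points of $G$ (equivalently, that the evaluation map $G \to \widehat{\widehat{G}}$ is injective), which is a standard consequence of the structure of finite abelian groups but is a genuinely extra input beyond the literal identity $|\widehat{K}| = |K|$. For that reason the first route is the cleaner one to splice in, and it matches the level of the machinery the paper already recalls (self-duality and \eqref{eq:first-orthogonality}).
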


Let $\mathbb{C}^{G}$ denote the set of all functions from $G$ to $\mathbb{C}$. Crucially, each function $f\in\mathbb{C}^{G}$ is equivalently viewed as a column vector indexed by elements of $G$, i.e., $f=(f(g):g\in G)\in\mathbb{C}^{|G|}$. The {\it first orthogonality relation} of characters states that for $\chi,\psi\in\widehat{G}$,
\begin{equation}\label{eq:first-orthogonality}
    \sum_{g\in G}\chi(g)\overline{\psi(g)}=
    \begin{cases}|G|, & \chi=\psi,\\
    0,              & \chi\neq\psi.
    \end{cases}
\end{equation}

\noindent Using \eqref{eq:first-orthogonality}, it is not hard to see that $\widehat{G}$ forms an orthonormal basis of $\mathbb{C}^G$ under the inner product
\begin{equation*}
    \langle f_1,f_2\rangle_{\mathbb{C}^G}\coloneqq\frac{1}{|G|}\sum_{g\in G}f_1(g)\overline{f_2(g)}.
\end{equation*}


By the orthogonality of the characters, one has $f=\sum_{\chi\in \widehat{G}}\langle f,\chi\rangle_{\mathbb{C}^G} \chi$ for all $f\in\mathbb{C}^G$. The {\it Fourier transform} of $f$, is the function $\widehat{f}:\widehat{G}\to\mathbb{C}$, defined by $\widehat{f}(\chi)=|G|\langle f,\chi\rangle_{\mathbb{C}^G}=\sum_{g\in G}f(g)\overline{\chi(g)}$, where $\widehat{f}(\chi),~ \chi\in\widehat{G}$ are called the {\it Fourier coefficients} of $f$. In particular, $\widehat{f}(\chi_0)$ is referred to as the {\it trivial Fourier coefficient} of $f$. Hence, $f=\frac{1}{|G|}\sum_{\chi\in \widehat{G}}\widehat{f}(\chi)\chi$, and the formula for the {\it inverse Fourier transform} is
$f(g)=\frac{1}{|G|}\sum_{\chi\in\widehat{G}}\widehat{f}(\chi)\chi(g)$. The inner product of the Fourier transform $\widehat{f_1},\widehat{f_2}\in\mathbb{C}^{\widehat{G}}$ of $f_1,f_2$ satisfies the {\it Plancherel formula}: $\langle \widehat{f_1},\widehat{f_2}\rangle_{\mathbb{C}^{\widehat{G}}}=|G|\langle f_1,f_2\rangle_{\mathbb{C}^G}$.


It is well known from the results of Lov\'asz \cite{lovasz1975spectra} and Babai \cite{babai1979spectra} that the spectrum of the Cayley color graph $\operatorname{Cay}(G,c)$ is given by the Fourier coefficients of its connection function. We include its short proof for completeness.

\begin{lemma}[Corollary 3.2 in \cite{babai1979spectra}]\label{lem:EigCayley}
    Let $G$ be a finite abelian group and $c:G\to\mathbb{C}$ be a function. Then the eigenvalues of $\operatorname{Cay}(G,c)$ are 
    \begin{equation*}
        \lambda_{\chi}=\widehat{c}(\chi),~\chi\in \widehat{G}.
    \end{equation*}

\noindent Moreover, the eigenvector corresponding to $\lambda_\chi$ is $\chi=(\chi(g):g\in G)\in\mathbb{C}^{|G|}$.
\end{lemma}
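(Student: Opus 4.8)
The plan is to prove \cref{lem:EigCayley} by directly verifying that each character $\chi\in\widehat{G}$, viewed as the column vector $(\chi(g):g\in G)\in\mathbb{C}^{|G|}$, is an eigenvector of the adjacency matrix $A$ of $\operatorname{Cay}(G,c)$, with eigenvalue $\widehat{c}(\chi)$. Recall from \cref{def:Cayley} that the $(x,y)$-entry of $A$ is $c(xy^{-1})$. The whole argument is a one-line computation of the matrix-vector product $A\chi$, so the real work is organizing the index substitution cleanly.

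First I would write out the $x$-th coordinate of $A\chi$:
\begin{equation*}
    (A\chi)(x)=\sum_{y\in G}A_{x,y}\,\chi(y)=\sum_{y\in G}c(xy^{-1})\chi(y).
\end{equation*}
The key step is the substitution $g=xy^{-1}$, equivalently $y=g^{-1}x$; as $y$ ranges over all of $G$, so does $g$. Using that $\chi$ is a homomorphism, $\chi(y)=\chi(g^{-1}x)=\chi(g^{-1})\chi(x)=\overline{\chi(g)}\,\chi(x)$, where the last equality uses $\chi(g^{-1})=\overline{\chi(g)}$ established in \cref{subsec:pre}. Substituting gives
\begin{equation*}
    (A\chi)(x)=\sum_{g\in G}c(g)\,\overline{\chi(g)}\,\chi(x)=\Bigl(\sum_{g\in G}c(g)\overline{\chi(g)}\Bigr)\chi(x)=\widehat{c}(\chi)\,\chi(x),
\end{equation*}
by the definition $\widehat{c}(\chi)=\sum_{g\in G}c(g)\overline{\chi(g)}$ from the Fourier-transform discussion above. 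Since this holds for every coordinate $x$, we conclude $A\chi=\widehat{c}(\chi)\,\chi$, so $\chi$ is an eigenvector with eigenvalue $\widehat{c}(\chi)$, as claimed.

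To finish, I would note that the $|G|$ characters in $\widehat{G}$ are pairwise orthogonal (hence linearly independent) by the first orthogonality relation \eqref{eq:first-orthogonality}, and there are exactly $|G|=|V(\operatorname{Cay}(G,c))|$ of them since $\widehat{G}\cong G$. Thus they form a complete basis of eigenvectors, so the eigenvalues $\{\widehat{c}(\chi):\chi\in\widehat{G}\}$ (with multiplicity) exhaust the entire spectrum of $A$, and no eigenvalues are missed. I do not expect any serious obstacle here: the only point demanding mild care is the direction of the index substitution and the correct appearance of the complex conjugate $\overline{\chi(g)}$ (rather than $\chi(g)$), which hinges on the convention that the matrix entry is $c(xy^{-1})$ and that the Fourier coefficient is defined with $\overline{\chi}$. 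Because $G$ is abelian all characters are one-dimensional, so no representation-theoretic machinery beyond these orthogonality relations is needed.
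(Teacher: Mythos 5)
Your proof is correct and follows essentially the same route as the paper's: the same computation of $(A\chi)(x)$ via the substitution $g=xy^{-1}$ and the identity $\chi(g^{-1}x)=\overline{\chi(g)}\chi(x)$, yielding $A\chi=\widehat{c}(\chi)\chi$. Your closing observation that the $|G|$ pairwise-orthogonal characters form a complete eigenbasis (so no eigenvalues are missed) is a small but worthwhile addition that the paper leaves implicit.
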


\begin{proof}
    Let $A$ be the adjacency matrix of $\operatorname{Cay}(G,c)$. Since for every $\chi\in\widehat{G}$ and $x\in G$, the $x$-coordinate of the vector $A\chi$ is
    \begin{equation*}
        (A\chi)(x)=\sum_{y\in G}c(xy^{-1})\chi(y)=\sum_{g\in G}c(g)\chi(g^{-1}x)=\left(\sum_{g\in G}c(g)\overline{\chi(g)}\right)\chi(x)=\widehat{c}(\chi) \chi(x),
    \end{equation*}

    \noindent we have $A\chi=\widehat{c}(\chi)\chi$, as needed.
\end{proof}

\subsection{The expander mixing lemma: Proof of Lemma \ref{lem:expander-mixing-Cayley}}

\begin{proof}[Proof of Lemma \ref{lem:expander-mixing-Cayley}]



Let $A$ be the adjacency matrix of $\operatorname{Cay}(G, c)$. Let $1_X$ denote the indicator vector of a subset $X\subseteq G$. Let $v^*$ denote the conjugate transpose of a complex vector $v$. Then,
\begin{equation*}    
\begin{split}
    e_c(S,T)
    &=\sum_{(x,y)\in S\times T}c(xy^{-1})=1_S^\ast\cdot A\cdot 1_T^{}=\left(\frac{1}{|G|}\sum_{\chi\in\widehat{G}}\widehat{1_S}(\chi)\chi\right)^* A\left(\frac{1}{|G|}\sum_{\chi\in\widehat{G}}\widehat{1_T}(\chi)\chi\right)\\
    &=\left(\frac{1}{|G|}\sum_{\chi\in\widehat{G}}\widehat{1_S}(\chi)\chi\right)^* \left(\frac{1}{|G|}\sum_{\chi\in\widehat{G}}\widehat{1_T}(\chi)\widehat{c}(\chi)\chi\right)=\frac{1}{|G|^2}\sum_{\chi\in\widehat{G}}\widehat{1_S}(\chi)\widehat{1_T}(\chi)\widehat{c}(\chi)|G|\\
    &=\frac{1}{|G|}\left(\widehat{c}(\chi_0)\widehat{1_S}(\chi_0)\widehat{1_T}(\chi_0)+\sum_{\chi\in\widehat{G}\setminus\{\chi_0\}}\widehat{c}(\chi)\widehat{1_S}(\chi)\widehat{1_T}(\chi)\right),\\
\end{split}
\end{equation*}

\noindent where the first three equalities follow from definition, the fourth equality follows from the spectrum of $\operatorname{Cay}(G,c)$ given by Lemma~\ref{lem:EigCayley}, and the fifth equality follows from the orthogonality of characters \eqref{eq:first-orthogonality}.

Since $\widehat{c}(\chi_0)=\sum_{g\in G}c(g),~\widehat{1_S}(\chi_0)=|S|$ and $\widehat{1_T}(\chi_0)=|T|$, we rearrange the above equation as

\begin{equation*}
\begin{split}
    \left|e_c(S,T)-\frac{1}{|G|}\sum_{g\in G}c(g)|S||T|
    \right|
    &=\frac{1}{|G|}\left|\sum_{\chi\in\widehat{G}\setminus\{\chi_0\}}\widehat{c}(\chi)\widehat{1_S}(\chi)\widehat{1_T}(\chi)\right|\\
    &\leq\frac{\lambda}{|G|}\sum_{\chi\in\widehat{G}\setminus\{\chi_0\}}\left|\widehat{1_S}(\chi)\widehat{1_T}(\chi)\right|\\
    &\leq\lambda\sqrt{\left(\frac{1}{|G|}\sum_{\chi\in\widehat{G}\setminus\{\chi_0\}}|\widehat{1_S}(\chi)|^2\right)\left(\frac{1}{|G|}\sum_{\chi\in\widehat{G}\setminus\{\chi_0\}}|\widehat{1_T}(\chi)|^2\right)}\\
    &=\lambda\sqrt{\left(\langle \widehat{1_S},\widehat{1_S}\rangle_{\mathbb{C}^{\widehat{G}}}-\frac{1}{|G|}|\widehat{1_S}(\chi_0)|^2\right)\left(\langle \widehat{1_T},\widehat{1_T}\rangle_{\mathbb{C}^{\widehat{G}}}-\frac{1}{|G|}|\widehat{1_T}(\chi_0)|^2\right)}\\
    &=\lambda\sqrt{|S||T|\left(1-\frac{|S|}{|G|}\right)\left(1-\frac{|T|}{|G|}\right)},\\
\end{split}
\end{equation*}

\noindent where the second inequality follows from the Cauchy-Schwarz inequality and the last equality follows from the Plancherel formula $\langle\widehat{f_1},\widehat{f_2}\rangle_{\mathbb{C}^{\widehat{G}}}=|G|\langle f_1,f_2\rangle_{\mathbb{C}^G}$ and $\langle 1_X,1_X\rangle_{\mathbb{C}^G}=\frac{|X|}{|G|}$ for all $X\subseteq G$. 
\end{proof}

\subsection{Spectrum of the polynomial incidence graph: Proof of \cref{thm:spectrum}}\label{subsec:spectrum}

\noindent Throughout this subsection, $V$ is always a separating subspace of $\mathbb{F}_q[x_1,\ldots,x_m]$. Since $V$ is isomorphic to $\mathbb{F}_q^{\dim V}$ as an $\mathbb{F}_q$-linear space, we henceforth identify each polynomial $f(x_1,\dots,x_m)=\sum_{(i_1,\dots,i_m)\in\mathcal{I}}f_{(i_1,\dots,i_m)}x_1^{i_1}\dots x_m^{i_m}\in V$ as a vector $(f_{(i_1,\dots,i_m)}:(i_1,\dots,i_m)\in\mathcal{I})\in\mathbb{F}_q^{\dim V}$. 
Hence $\widehat{V}\cong\widehat{\mathbb{F}_q^{\dim V}}$, and by Example~\ref{eg:vector-space-dual-group}, we have $\widehat{V}\cong\{\chi_f:f\in V\}$, where $\chi_f(g)=\zeta_p^{\operatorname{Tr}(\langle f,g\rangle)}$ for all $g\in V$. Note that for $f,g\in V$, $\langle f,g\rangle=\sum_{(i_1,\dots,i_m)\in\mathcal{I}}f_{(i_1,\dots,i_m)}g_{(i_1,\dots,i_m)}$. For each $\alpha\in\mathbb{F}_q^m$, let $V_\alpha=\{f\in V:f(\alpha)=0\}$.

Below we prove the key technical lemma of this paper.



\begin{lemma}[Key technical lemma]\label{lem:vanishing-subspace}
Let $V\subseteq \mathbb{F}_q[x_1,\ldots,x_m]$ and $\mathcal{I}\subseteq\mathbb{N}^m$ be defined in Definition~\ref{def:sep-subspace}. Then
\begin{itemize}
    \item [1)] For each $\alpha\in\mathbb{F}_q^m$, we have $|V_{\alpha}|=q^{\dim V-1}$.
    \item [2)] For every $C\in\mathbb{F}_q$ and $\alpha\in\mathbb{F}_q^m$, let     \begin{equation*}
        p_{C,\alpha}(x_1,\ldots,x_m)=C\cdot\sum_{(i_1,\ldots,i_m)\in\mathcal{I}}\alpha_1^{i_1}\cdots\alpha_m^{i_m}x_1^{i_1}\cdots x_{m}^{i_m}.
    \end{equation*}
    Then $V_{\alpha}^\perp=\{\chi_{p_{C,\alpha}}:C\in\mathbb{F}_q\}$.
    \item [3)] For every two distinct $\alpha,\beta\in\mathbb{F}_q^m$, we have $V_{\alpha}^{\perp}\cap V_\beta^{\perp}=\{\chi_0\}$.
\end{itemize}
\end{lemma}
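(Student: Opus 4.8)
The plan is to route all three parts through the evaluation map $\operatorname{ev}_\alpha\colon V\to\mathbb{F}_q$, $f\mapsto f(\alpha)$, realized as a bilinear pairing. Under the identification $f=(f_i:i\in\mathcal{I})$, I would set $a_\alpha=(\alpha^i:i\in\mathcal{I})\in\mathbb{F}_q^{\dim V}$ with $\alpha^i:=\alpha_1^{i_1}\cdots\alpha_m^{i_m}$, so that $\operatorname{ev}_\alpha(f)=f(\alpha)=\langle f,a_\alpha\rangle$ and, crucially, the coefficient vector of $p_{C,\alpha}$ is exactly $C\,a_\alpha$. Since $(0,\ldots,0)\in\mathcal{I}$, the entry of $a_\alpha$ at that index is $1$, whence $a_\alpha\neq 0$ and $\operatorname{ev}_\alpha$ is surjective. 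Part 1 then follows at once: $V_\alpha=\ker\operatorname{ev}_\alpha$, so rank-nullity gives $|V_\alpha|=q^{\dim V-1}$.

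For Part 2, I would first check the easy inclusion directly: for $h\in V_\alpha$, $\langle C a_\alpha,h\rangle=C\,h(\alpha)=0$, so $\chi_{p_{C,\alpha}}(h)=\zeta_p^{\operatorname{Tr}(0)}=1$ and $\chi_{p_{C,\alpha}}\in V_\alpha^\perp$. Because $a_\alpha\neq0$ the $q$ vectors $C a_\alpha$ ($C\in\mathbb{F}_q$) are distinct, and the self-duality bijection $f\mapsto\chi_f$ then makes $\{\chi_{p_{C,\alpha}}:C\in\mathbb{F}_q\}$ a set of $q$ distinct elements of $V_\alpha^\perp$. Comparing with $|V_\alpha^\perp|=|V|/|V_\alpha|=q$, supplied by \cref{fact:annihilator} and Part 1, forces equality, giving the stated description. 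This cardinality route sidesteps the one subtlety here, namely that character-annihilation a priori only requires $\operatorname{Tr}(\langle f,h\rangle)=0$ on $V_\alpha$ rather than $\langle f,h\rangle=0$.

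For Part 3, suppose $\chi\in V_\alpha^\perp\cap V_\beta^\perp$ with $\chi\neq\chi_0$. By Part 2, $\chi=\chi_{C a_\alpha}=\chi_{D a_\beta}$, and injectivity of $f\mapsto\chi_f$ forces $C a_\alpha=D a_\beta$; since $\chi\neq\chi_0$ and $a_\alpha,a_\beta\neq0$, both $C,D$ are nonzero, so $a_\alpha=\lambda a_\beta$ with $\lambda\in\mathbb{F}_q^\ast$. Comparing entries at $(0,\ldots,0)$, where both vectors equal $1$, gives $\lambda=1$, hence $a_\alpha=a_\beta$. This is where property $(*)$ is essential, and it is the step I expect to be the crux: comparing entries at the indices $(0,\ldots,k_j,\ldots,0)\in\mathcal{I}$ yields $\alpha_j^{k_j}=\beta_j^{k_j}$ for each $j$, and since $\gcd(k_j,q-1)=1$ the power map $t\mapsto t^{k_j}$ permutes $\mathbb{F}_q^\ast$ and fixes $0$, hence is a bijection of $\mathbb{F}_q$, so $\alpha_j=\beta_j$ for all $j$. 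This contradicts $\alpha\neq\beta$, leaving $\chi_0$ as the only common element. The two hypotheses of property $(*)$ play complementary roles: the constant monomial normalizes the scalar $\lambda$ to $1$, and the coprimality of the $k_j$ lets the power maps be inverted.
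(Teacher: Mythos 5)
Your proposal is correct and follows essentially the same route as the paper's proof: the evaluation map plus rank--nullity for part 1), the easy inclusion combined with the cardinality count $|V_\alpha^\perp|=|V|/|V_\alpha|=q$ for part 2), and in part 3) the constant monomial to pin down the scalar together with the monomials $x_j^{k_j}$ and $\gcd(k_j,q-1)=1$ to force $\alpha=\beta$. The only differences are cosmetic (packaging the coefficients as the vector $a_\alpha$ and arguing part 3 by contradiction via $a_\alpha=\lambda a_\beta$, rather than showing the two sets $\{p_{C,\alpha}\}$ and $\{p_{C,\beta}\}$ are disjoint), and you make explicit two points the paper leaves implicit, namely the injectivity of $f\mapsto\chi_f$ and that annihilation only requires vanishing of the trace.
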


\begin{proof}
    \begin{itemize}
        \item [1)] Consider the linear map $E_\alpha:V\to\mathbb{F}_q$, where each $f\in V$ is mapped to $E_\alpha(f)=f(\alpha)$. 
        Then $V_{\alpha}=\ker E_{\alpha}$. By the rank-nullity theorem, to prove the lemma, it suffices to show that $E_\alpha(V)\neq\{0\}$. Indeed, since $(0,\ldots,0)\in \mathcal{I}$, $V$ contains the constant function $1$ that maps everything to $1$, in particular, $1\in E_{\alpha}(V)$. 
        \item [2)]     Since $\langle p_{C,\alpha },f\rangle=C\cdot\sum_{i\in\mathcal{I}}f_i\alpha^i=C\cdot f(\alpha )$, 
        we have $\chi_{p_{C,\alpha }}(f)=\zeta_p^{\operatorname{Tr}(\langle p_{C,\alpha },f\rangle)}=\zeta_p^{\operatorname{Tr}(C\cdot f(\alpha))}$. 
    
        On one hand, we have $\{\chi_{p_{C,\alpha }}:C\in\mathbb{F}_q\}\subseteq V_{\alpha}^{\perp }$, since for each $f\in V_{\alpha}$, $\chi_{p_{C,\alpha}}(f)=\zeta_p^{\operatorname{Tr}(C\cdot f(\alpha))}=\zeta_p^0=1$. On the other hand, by Fact~\ref{fact:annihilator} and 1), we have $|V_\alpha^\perp |=|V|/|V_\alpha|=q$. Since $|\{\chi_{p_{C,\alpha}}:C\in\mathbb{F}_q\}|=q$, we conclude that $V_\alpha^\perp =\{\chi_{p_{C,\alpha}}:C\in\mathbb{F}_q\}$.
        \item [3)] Observe that for $C=0$, $\chi_{p_{0,\alpha}}=\chi_{p_{0,\beta}}=\chi_0$. Therefore, $\chi_0\in V_{\alpha}^\perp\cap V_\beta^\perp$. To prove the lemma, it suffices to show that $\{p_{C,\alpha}:C\in\mathbb{F}_q^\ast \}\cap\{p_{C,\beta}:C\in\mathbb{F}_q^\ast \}=\varnothing$. 
        Clearly, for every two distinct $C,C'\in\mathbb{F}_q^\ast $, $p_{C,\alpha}\neq p_{C',\beta}$ since $(p_{C,\alpha}-p_{C',\beta})(0)=C-C'\neq0$. It remains to show that $p_{1,\alpha}\neq p_{1,\beta}$. Since $V$ is a separating subspace and $\alpha\neq\beta$, there exists $(i_1,\ldots,i_m)\in\mathcal{I}$ such that $\alpha_1^{i_1}\cdots\alpha_m^{i_m}\neq\beta_1^{i_1}\cdots\beta_m^{i_m}$. Hence the coefficient of $x_1^{i_1}\cdots x_m^{i_m}$ in $p_{1,\alpha}-p_{1,\beta}$ is nonzero, and so $p_{1,\alpha}\neq p_{1,\beta}$. 
    \end{itemize}
\end{proof}

Finally, we are in a position to present the proof of \cref{thm:spectrum}.

\begin{proof}[Proof of \cref{thm:spectrum}]
    Let $A$ be the adjacency matrix of the polynomial incidence graph $\Gamma=\operatorname{Cay}(V,\nos_q)$. By Lemma~\ref{lem:EigCayley}, the eigenvalues of $A$ are $\widehat{\nos_q}(\chi), \chi\in\widehat{V}$. Since $A$ is a real symmetric matrix, it has real eigenvalues.  Therefore, for each $\chi\in\widehat{V}$,     \begin{equation*}
        \begin{split}
            \widehat{\nos_q}(\chi)
            =\overline{\widehat{\nos_q}(\chi)}
            &=\sum_{f\in V}\nos_q(f)\chi(f)\\
            &=\sum_{f\in V}\sum_{\beta \in\mathbb{F}_q^m:f(\beta )=0}\chi(f)
            =\sum_{\beta \in\mathbb{F}_q^m}\sum_{f\in V_\beta}\chi(f).\\
        \end{split}
    \end{equation*}

\noindent Since $V_{\beta}$ is a subgroup of $V$, the restriction of the characters of $V$ to $V_{\beta}$ are also characters of $V_{\beta}$. Then, by the first orthogonality relation of the characters of $V_{\beta}$, we have
\begin{equation}\label{eq:sub-space-orthogonality}
    \sum_{f\in V_\beta}\chi(f)=
    \begin{cases}
    |V_{\beta}|, & \chi\in V_{\beta}^\perp,\\
    0          , & \chi\notin V_{\beta}^\perp.
    \end{cases}
\end{equation}

\noindent Therefore, by \eqref{eq:sub-space-orthogonality} and Lemma~\ref{lem:vanishing-subspace}, it is not hard to verify the folllowing equalites.
\begin{itemize}
    \item If $\chi=\chi_0$, then
        \begin{equation*}
            \widehat{\nos_q}(\chi_0)
            =\sum_{\beta\in\mathbb{F}_q^m}\sum_{f\in V_\beta}\chi_0(f)
            =\sum_{\beta\in\mathbb{F}_q^m}\sum_{f\in V_\beta}1
            =q^{\dim V-1+m};\\
        \end{equation*}
        
    \item If $\chi=\chi_{p_{C,\alpha}}$ for $C\in\mathbb{F}_q^\ast  $ and $\alpha\in\mathbb{F}_q^m$, then 
    \begin{equation*}
        \begin{split}
            \widehat{\nos_q}(\chi_{p_{C,\alpha}})
            &=\sum_{\beta\in\mathbb{F}_q^m}\sum_{f\in V_\beta}\chi_{p_{C,\alpha}}(f)\\
            &=\sum_{f\in V_\alpha}\chi_{p_{C,\alpha}}(f)+\sum_{\beta\in\mathbb{F}_q^m:\beta\neq\alpha}\sum_{f\in V_\beta}\chi_{p_{C,\alpha}}(f)\\
            &=|V_\alpha|+0=q^{\dim V-1};
        \end{split}
    \end{equation*}
    
    \item If $\chi\notin\{\chi_{p_{C,\alpha}}:C\in\mathbb{F}_q,\alpha\in\mathbb{F}_q^m\}$, then
        \begin{equation*}
            \widehat{\nos_q}(\chi)=\sum_{\beta\in\mathbb{F}_q^m}\sum_{f\in V_\beta}\chi(f)=0.
        \end{equation*}
\end{itemize}
Hence by Lemma~\ref{lem:EigCayley}, the eigenvalues of $\Gamma$ are $q^{\dim V+m-1}, q^{\dim V-1}$ and $0$, with multiplicities $1$, $(q-1)q^m$, and $|V|-(q-1)q^m-1$, respectively. Moreover, the eigenspace corresponding to $q^{\dim V+m-1}$ is spanned by the all one vector and the eigenspace corresponding to $q^{\dim V-1}$ is spanned by $\{(\zeta_p^{\operatorname{Tr}(Cf(\alpha))}:f\in V):C\in\mathbb{F}_q^\ast , \alpha\in\mathbb{F}_q^m\}$.
\end{proof}

\section{Incidence theorems for multivariate polynomials}
\label{sec:4}

\noindent The main goal of this section is to present the proofs of Theorems~\ref{thm:cross-version},~\ref{thm:subpace-incidence-bound-multi-poly} and~\ref{thm:subspace-point-multi-poly}.

\subsection{Multivariate polynomial incidences: Proofs of Theorems~\ref{thm:cross-version} and~\ref{thm:subpace-incidence-bound-multi-poly}}

\begin{proof}[Proof of \cref{thm:cross-version}]
    To prove \cref{thm:cross-version}, we apply the expander mixing lemma (Lemma~\ref{lem:expander-mixing-Cayley}) to the polynomial incidence graph $\Gamma=\operatorname{Cay}(V,\nos_q)$ defined in Definition~\ref{def:Cay(V_m,r,N_q)} with $S=\mathcal{L}$ and $T=\mathcal{L}'$. By Fact~\ref{fact:crucial-equality} and Lemma~\ref{lem:vanishing-subspace}, it is not hard to see that $e_{\nos_q}(\mathcal{L},\mathcal{L}')=\sum_{f\in\mathcal{L},f'\in\mathcal{L'}}\nos_q(f-f')$ and $\frac{1}{|V|}\sum_{f\in V}\nos_q(f)=\frac{1}{|V|}\sum_{\alpha\in\mathbb{F}_q^m}|V_\alpha|=q^{m-1}$. Moreover, by \cref{thm:spectrum}, the maximum absolute value of the non-trivial Fourier coefficients of $\nos_q$ is $q^{\dim V-1}$. 
    Putting all of this together, it follows from Lemma~\ref{lem:expander-mixing-Cayley} that
    \begin{equation*}
        \left|\sum_{f\in\mathcal{L},f'\in\mathcal{L'}}\nos_q(f-f')-q^{m-1}|\mathcal{L}||\mathcal{L}'|\right|\le q^{\dim V-1}\sqrt{|\mathcal{L}|\mathcal{|L'|}},
    \end{equation*}

    \noindent completing the proof of the theorem.
\end{proof}

\begin{proof}[Proof of \cref{thm:subpace-incidence-bound-multi-poly}]
    The upper bound follows directly from \cref{thm:cross-version} by setting $\mathcal{L}'=\mathcal{L}$, 
    and the lower bound follows from the Cauchy-Schwarz inequality:
    \begin{equation*}
        \begin{split}
            \sum_{f,f'\in\mathcal{L}}\nos_q(f-f')
            &=\sum_{\alpha\in\mathbb{F}_q^m,\beta\in\mathbb{F}_q}|\{f\in \mathcal{L}:f(\alpha)=\beta\}|^2\\
            &\geq\frac{1}{q^{m+1}}\left(\sum_{\alpha\in\mathbb{F}_q^m,\beta\in\mathbb{F}_q}|\{f\in\mathcal{L}:f(\alpha)=\beta\}|\right)^2=\frac{1}{q^{m+1}}\cdot (q^m|\mathcal{L}|)^2=q^{m-1}|\mathcal{L}|{^2}.
        \end{split}
    \end{equation*}
\end{proof}

\subsection{Point-multivariate polynomial incidences: Proof of \cref{thm:subspace-point-multi-poly}}

\noindent In this subsection, we prove \cref{thm:subspace-point-multi-poly} by the second moment method. Our proof is inspired by the work of Murphy and Petridis \cite{murphy2016point}, who applied a similar method to give a new proof of the point-line incidence bound of Vinh \cite{vinh2011szemeredi}.

\begin{proof}[Proof of \cref{thm:subspace-point-multi-poly}]
    For each $v\in\mathbb{F}_q^{m+1}$, let $i_{\mathcal{L}}(v)=|\{f\in\mathcal{L}:f(v_1,\ldots,v_{m})=v_{m+1}\}|$ be the number of polynomials in $\mathcal{L}$ that are incident to $v$. We bound the second moment of $i_{\mathcal{L}}(v)$ from the above as follows:
\begin{equation*}
    \begin{split}
        \sum_{v\in\mathbb{F}_q^{m+1}}i_{\mathcal{L}}(v)^2
        &=\sum_{v\in\mathbb{F}_q^{m+1}}|\{(f,f')\in\mathcal{L}\times\mathcal{L}:f(v_1,\ldots,v_m)=f'(v_1,\ldots,v_m)=v_{m+1}\}|\\
        &=\sum_{f,f'\in\mathcal{L}}|\{v\in\mathbb{F}_q^{m+1}:f(v_1,\ldots,v_m)=f'(v_1,\ldots,v_m)=v_{m+1}\}|\\
        &=\sum_{f,f'\in\mathcal{L}}\nos_q(f-f')\leq q^{m-1}|\mathcal{L}|^2+q^{\dim V-1}|\mathcal{L}|,
    \end{split}
\end{equation*}

\noindent where the last inequality follows from \cref{thm:subpace-incidence-bound-multi-poly}. 

We proceed to prove an upper bound on the variance of $i_{\mathcal{L}}(v)$:
\begin{equation*}
    \begin{split}
        \sum_{v\in\mathbb{F}_q^{m+1}}\left(i_{\mathcal{L}}(v)-\frac{|\mathcal{L}|}{q}\right)^2
        &=\sum_{v\in\mathbb{F}_q^{m+1}}i_{\mathcal{L}}(v)^2-\frac{2|\mathcal{L}|}{q}\sum_{v\in\mathbb{F}_q^{m+1}}i_{\mathcal{L}}(v)+q^{m+1}\frac{|\mathcal{L}|^2}{q^2}\\
        &=\sum_{v\in\mathbb{F}_q^{m+1}}i_{\mathcal{L}}(v)^2-\frac{2|\mathcal{L}|}{q}\cdot q^{m}|\mathcal{L}|+q^{m-1}|\mathcal{L}|^2\\
        &=\sum_{v\in\mathbb{F}_q^{m+1}}i_{\mathcal{L}}(v)^2-q^{m-1}|\mathcal{L}|^2\leq q^{\dim V-1}|\mathcal{L}|.
    \end{split}
\end{equation*}

Consequently, by the Cauchy-Schwarz inequality, 
\begin{equation*}
    \begin{split}
        \left|I(\mathcal{P},\mathcal{L})-\frac{|\mathcal{P}||\mathcal{L}|}{q}\right|
        &=\left|\sum_{v\in\mathcal{P}}\left(i_{\mathcal{L}}(v)-\frac{|\mathcal{L}|}{q}\right)\right|\\
        &\leq\sum_{v\in\mathcal{P}}\left|i_{\mathcal{L}}(v)-\frac{|\mathcal{L}|}{q}\right|\\
        &\leq\sqrt{|\mathcal{P}|\sum_{v\in\mathcal{P}}\left(i_{\mathcal{L}}(v)-\frac{|\mathcal{L}|}{q}\right)^2}\\
        &\leq\sqrt{|\mathcal{P}|\sum_{v\in\mathbb{F}_q^{m+1}}\left(i_{\mathcal{L}}(v)-\frac{|\mathcal{L}|}{q}\right)^2}\leq\sqrt{q^{\dim V-1}|\mathcal{P}||\mathcal{L}|},
    \end{split}
\end{equation*}

\noindent completing the proof of the theorem.
\end{proof}

\section{Concluding remarks}
\label{sec:5}

\noindent We investigated incidence problems involving multivariate polynomials of bounded degree over finite fields. Several interesting open questions arise from our work.

\paragraph{Closing the gap on the bounds of $\tau_q(m, r)$.} Recall that $\tau_q(m, r)$ denotes the smallest threshold such that for every $\mathcal{L} \subseteq V_{m,r}$ with $|\mathcal{L}| \gg \tau_q(m, r)$, $\sum_{f,f'\in\mathcal{L}} \nos_q(f - f') = (1 + o(1)) q^{m-1} |\mathcal{L}|^2$. Proposition~\ref{prop:incidence-bound-multi-poly} and the example below it showed that $q^{\dim V_{m,r-1}}\le \tau_q(m, r)\le q^{\dim V_{m,r}-m}$; in particular, $\tau_q(1, r)=q^r$ and $\tau_q(m,1)=q$. It is an interesting problem to determine the asymptotic order of $\tau_q(m, r)$ for each $m \ge 2$ and $r \ge 2$. 
\begin{question}
    What is the asymptotic order of $\tau_q(m, r)$ for $m \ge 2$ and $r \ge 2$? 
\end{question}

\paragraph{Improving upon the Cauchy–Schwarz bound for small $|\mathcal{P}|$ or $|\mathcal{L}|$.} Our point-multivariate polynomial incidence bound \eqref{eq:subspace-point-multi-poly}
\begin{equation*}
    \left|I(\mathcal{P}, \mathcal{L}) - \frac{|\mathcal{P}||\mathcal{L}|}{q} \right| 
    \leq q^{(\dim V - 1)/2} \sqrt{|\mathcal{P}||\mathcal{L}|}
\end{equation*}
improves upon the classical bound obtained via the Cauchy–Schwarz inequality \eqref{eq:point-multi-poly-CS}
\begin{equation*}    
    I(\mathcal{P}, \mathcal{L}) \leq \min \left\{ 
        |\mathcal{L}| + q^{\dim V_{m,r}/2 - 1} |\mathcal{P}| |\mathcal{L}|^{1/2},\quad 
        |\mathcal{P}| + r^{1/2} q^{(m - 1)/2} |\mathcal{P}|^{1/2} |\mathcal{L}|
    \right\}
\end{equation*}
in regimes where both $|\mathcal{P}|$ and $|\mathcal{L}|$ are large; for instance, when $|\mathcal{P}| > q$ and $|\mathcal{L}| > \frac{1}{r} q^{\dim V - m}$. It would be very interesting to beat the Cauchy–Schwarz inequality when $|\mathcal{P}|$ or $|\mathcal{L}|$ is relatively small (see e.g. \cite{bourgain2004sum, iosevich2024improved}). 

\begin{question}
    Can we improve upon the Cauchy–Schwarz bound \eqref{eq:point-multi-poly-CS} for small $|\mathcal{P}|$ or $|\mathcal{L}|$? Can we obtain even better incidence bounds by leveraging additional structural (algebraic or geometric) information of $|\mathcal{P}|$ or $|\mathcal{L}|$?
\end{question}

\paragraph{A lower bound on $\lambda$.} A crucial tool of our proof is the expander mixing lemma for abelian Cayley color graphs (Lemma~\ref{lem:expander-mixing-Cayley}), where the key parameter is the maximum absolute value of the non-trivial Fourier coefficients of the connection function. Below, we prove a lower bound for this quantity for Cayley color graphs. 

\begin{theorem}\label{thm:Cayley-Alon-Boppna}
    Let $G$ be a finite abelian group and let $c:G\to\mathbb{C}$. Let $\lambda$ be the largest non-trivial Fourier coefficient of $c$ in absolute value. Then $\lambda \ge \sqrt{|G|\operatorname{Var}_{g\sim G}|c(g)|}$. In particular, if $c$ is real-valued, then $\lambda \ge \sqrt{|G|\operatorname{Var}_{g\sim G}c(g)}$.
\end{theorem}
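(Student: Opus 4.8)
The plan is to read off the eigenvalues via \cref{lem:EigCayley}, compute the Frobenius norm of the adjacency matrix by a trace (Parseval) identity, and then extract the largest non-trivial Fourier coefficient by a one-line averaging argument. First I would record the role of the hypothesis: the $(x,y)$-entry of the adjacency matrix $A$ of $\operatorname{Cay}(G,c)$ is $c(xy^{-1})$ and its $(y,x)$-entry is $c(yx^{-1})=\overline{c(xy^{-1})}$, so $c(g)=\overline{c(g^{-1})}$ makes $A$ Hermitian. By \cref{lem:EigCayley} its eigenvalues are $\widehat c(\chi)$, $\chi\in\widehat G$, and Hermiticity guarantees these are real; thus $\lambda=\max_{\chi\neq\chi_0}|\widehat c(\chi)|$ is a genuine spectral quantity.

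Next I would evaluate $\sum_{\chi\in\widehat G}|\widehat c(\chi)|^2$ two ways. As the trace $\operatorname{tr}(A^\ast A)=\sum_{x,y\in G}|c(xy^{-1})|^2=|G|\sum_{g\in G}|c(g)|^2$, since each $g$ is realized as $xy^{-1}$ by exactly $|G|$ ordered pairs $(x,y)$; equivalently this is the Plancherel formula recorded in \cref{subsec:pre}. Isolating the trivial character and using $\widehat c(\chi_0)=\sum_{g\in G}c(g)$ gives $\sum_{\chi\neq\chi_0}|\widehat c(\chi)|^2=|G|\sum_{g}|c(g)|^2-\bigl|\sum_{g}c(g)\bigr|^2$. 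Averaging this over the $|G|-1$ non-trivial characters and using that $\lambda^2$ dominates each summand yields $\lambda^2\ge\bigl(|G|\sum_{g}|c(g)|^2-|\sum_{g}c(g)|^2\bigr)/(|G|-1)$.

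To finish I would invoke two elementary facts. The Cauchy--Schwarz inequality $\bigl|\sum_g c(g)\bigr|^2\le\bigl(\sum_g|c(g)|\bigr)^2\le |G|\sum_g|c(g)|^2$ shows the numerator is nonnegative, so I may weaken the denominator from $|G|-1$ to $|G|$ without harm; and the triangle inequality $\bigl|\sum_g c(g)\bigr|\le\sum_g|c(g)|$ lets me replace $|\widehat c(\chi_0)|$ by $\sum_g|c(g)|$. Together these give
\[
\lambda^2\ge\frac{1}{|G|}\Bigl(|G|\sum_{g}|c(g)|^2-\bigl(\textstyle\sum_g|c(g)|\bigr)^2\Bigr)=\sum_g|c(g)|^2-\frac{1}{|G|}\bigl(\textstyle\sum_g|c(g)|\bigr)^2=|G|\operatorname{Var}_{g\sim G}|c(g)|,
\]
which is the claimed bound. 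When $c$ is real-valued one has $\bigl|\sum_g c(g)\bigr|^2=\bigl(\sum_g c(g)\bigr)^2$ exactly, so the triangle-inequality step is unnecessary and the same computation delivers the sharper $\lambda^2\ge\sum_g c(g)^2-\frac{1}{|G|}\bigl(\sum_g c(g)\bigr)^2=|G|\operatorname{Var}_{g\sim G}c(g)$.

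The computation is short, so there is no serious obstacle; the only point needing care is the complex case, where one must decide how to treat the trivial coefficient $\bigl|\sum_g c(g)\bigr|$. Bounding it via the triangle inequality is precisely what recasts the estimate as the variance of the real random variable $|c(g)|$ (rather than an awkward quantity attached to the complex values of $c$), and relaxing the normalization from $\tfrac{1}{|G|-1}$ to $\tfrac{1}{|G|}$ is legitimate exactly because the averaged quantity is nonnegative.
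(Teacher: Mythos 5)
Your proof is correct and takes essentially the same route as the paper's: both compute the same trace quantity two ways (Frobenius norm of the adjacency matrix versus the sum of squared eigenvalues, i.e.\ Plancherel for $c$), isolate the trivial Fourier coefficient, average $\lambda^2$ over the $|G|-1$ non-trivial characters, and then use the triangle and Cauchy--Schwarz inequalities to pass to $|G|\operatorname{Var}_{g\sim G}|c(g)|$, dropping the triangle-inequality step in the real-valued case. The only cosmetic difference is that you work with $\operatorname{tr}(A^\ast A)=\sum_{\chi}|\widehat{c}(\chi)|^2$, which holds for any $c$ by Plancherel, whereas the paper writes $\operatorname{tr}(A^2)=\sum_{\chi}\widehat{c}(\chi)^2$ and invokes Hermiticity to make the eigenvalues real; under the hypothesis $c(g)=\overline{c(g^{-1})}$ these are the same identity.
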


\begin{proof}
    By Plancherel's identity,
\begin{equation*}
    |G|\sum_{g\in G}|c(g)|^2
    = \sum_{\chi\in\widehat{G}}|\widehat{c}(\chi)|^2
    = \left|\widehat{c}(\chi_0)\right|^2+\sum_{\chi\neq\chi_0}|\widehat{c}(\chi)|^2
    \le \left|\sum_{g\in G}c(g)\right|^2+(|G|-1)\lambda^2.
\end{equation*}

\noindent Hence
\begin{equation*}
    \lambda^2 \ge \frac{|G|\sum_g |c(g)|^2 - |\sum_g c(g)|^2}{|G|-1} \ge \frac{|G|\sum_g |c(g)|^2 - (\sum_g |c(g)|)^2}{|G|},
\end{equation*}
where the second inequality follows from the triangle inequality. We rewrite the right-hand side of the second inequality in terms of the variance as follows: 
\begin{equation*}
\begin{split}
    \frac{|G|\sum_g |c(g)|^2 - (\sum_g |c(g)|)^2}{|G|}
    &= |G|\Biggl[\frac{1}{|G|}\sum_{g\in G}|c(g)|^2 - \Bigl(\frac{1}{|G|}\sum_{g\in G}|c(g)|\Bigr)^2\Biggr]\\
    &= |G|\Bigl(\mathbb{E}|c(g)|^2 - (\mathbb{E}|c(g)|)^2\Bigr)
    = |G|\operatorname{Var}|c(g)|.
\end{split}
\end{equation*}

\noindent It then follows that $\lambda^2 \ge |G|\,\operatorname{Var}_{g\sim G} |c(g)|$. If $c$ is real-valued, the same argument gives $\lambda^2\ge \frac{|G|\sum_g c(g)^2-(\sum_g c(g))^2}{|G|-1}\ge |G|\,\operatorname{Var}_{g\sim G}c(g)$.
\end{proof}

\section*{Acknowledgments}

\noindent C. Shangguan and Y. Yang are supported by National Natural Science Foundation of China under Grant Nos. 12571352 and 12231014, and Fundamental Research Funds for the Central Universities. T. Zhang is supported by National Natural Science Foundation of China under Grant No. 12571357, and Natural Science Basic Research Program of Shaanxi under Program No. 2025JC-YBMS-048. Part of the work was done when T. Zhang was visting Shandong University Qingdao Campus in May 2025.

{\small
\bibliographystyle{plain}
\normalem
\bibliography{ref}

@article{alon1988explicit,
  author  = {Alon, Noga and Chung, Fan R. K.},
  title   = {Explicit Construction of Linear Sized Tolerant Networks},
  journal = {Discrete Mathematics},
  volume  = {72},
  number  = {1--3},
  pages   = {15--19},
  year    = {1988}
}

@article{alon1994explicit,
  author  = {Alon, Noga},
  title   = {Explicit {Ramsey} Graphs and Orthonormal Labelings},
  journal = {The Electronic Journal of Combinatorics},
  pages   = {R12--R12},
  year    = {1994}
}

@article{arala2025expansion,
  author  = {Arala, Nuno and Chow, Sam},
  title   = {Expansion Properties of Polynomials over Finite Fields},
  journal = {Finite Fields and Their Applications},
  volume  = {108},
  pages   = {102687},
  year    = {2025}
}

@article{babai1979spectra,
  author  = {Babai, L{\'a}szl{\'o}},
  title   = {Spectra of {Cayley} Graphs},
  journal = {Journal of Combinatorial Theory, Series B},
  volume  = {27},
  number  = {2},
  pages   = {180--189},
  year    = {1979}
}

@article{bishnoi2020construction,
  author  = {Bishnoi, Anurag and Ihringer, Ferdinand and Pepe, Valentina},
  title   = {A Construction for Clique-Free Pseudorandom Graphs},
  journal = {Combinatorica},
  volume  = {40},
  number  = {3},
  pages   = {307--314},
  year    = {2020}
}

@article{bourgain2004sum,
  author  = {Bourgain, Jean and Katz, Nets and Tao, Terence},
  title   = {A Sum-Product Estimate in Finite Fields, and Applications},
  journal = {Geometric and Functional Analysis},
  volume  = {14},
  number  = {1},
  pages   = {27--57},
  year    = {2004}
}

@article{cayley1878desiderata,
  author  = {Cayley, Arthur},
  title   = {Desiderata and Suggestions: No. 2. The Theory of Groups: Graphical Representation},
  journal = {American Journal of Mathematics},
  volume  = {1},
  number  = {2},
  pages   = {174--176},
  year    = {1878}
}

@article{clarkson1990combinatorial,
  author  = {Clarkson, Kenneth L. and Edelsbrunner, Herbert and Guibas, Leonidas J. and Sharir, Micha and Welzl, Emo},
  title   = {Combinatorial Complexity Bounds for Arrangements of Curves and Spheres},
  journal = {Discrete \& Computational Geometry},
  volume  = {5},
  number  = {2},
  pages   = {99--160},
  year    = {1990}
}

@article{elekes1997number,
  author  = {Elekes, Gy{\"o}rgy},
  title   = {On the Number of Sums and Products},
  journal = {Acta Arithmetica},
  volume  = {81},
  number  = {4},
  pages   = {365--367},
  year    = {1997}
}

@article{guth2015erdHos,
  author  = {Guth, Larry and Katz, Nets Hawk},
  title   = {On the {Erd{\H{o}}s} Distinct Distances Problem in the Plane},
  journal = {Annals of Mathematics},
  pages   = {155--190},
  year    = {2015}
}

@article{helfgott2011explicit,
  author  = {Helfgott, Harald Andr{\'e}s and Rudnev, Misha},
  title   = {An Explicit Incidence Theorem in $\mathbb{F}_p$},
  journal = {Mathematika},
  volume  = {57},
  number  = {1},
  pages   = {135--145},
  year    = {2011}
}

@article{iosevich2004fourier,
  author  = {Iosevich, Alex},
  title   = {Fourier Analysis and Geometric Combinatorics},
  journal = {Topics in Mathematical Analysis},
  pages   = {321--335},
  year    = {2004}
}

@article{iosevich2024improved,
  author  = {Iosevich, Alex and Pham, Thang and Senger, Steven and Tait, Michael},
  title   = {Improved Incidence Bounds over Arbitrary Finite Fields via the {VC}-Dimension Theory},
  journal = {European Journal of Combinatorics},
  volume  = {118},
  pages   = {103928},
  year    = {2024}
}

@article{jones2011explicit,
  author  = {Jones, Timothy G. F.},
  title   = {Explicit Incidence Bounds over General Finite Fields},
  journal = {Acta Arithmetica},
  volume  = {150},
  number  = {3},
  pages   = {241--262},
  year    = {2011}
}

@article{jones2016improved,
  author  = {Jones, Timothy G. F.},
  title   = {An Improved Incidence Bound for Fields of Prime Order},
  journal = {European Journal of Combinatorics},
  volume  = {52},
  pages   = {136--145},
  year    = {2016}
}

@article{kong2025point,
  author  = {Kong, Xiangliang and Tamo, Itzhak},
  title   = {A Point-Variety Incidence Theorem over Finite Fields, and Its Applications},
  journal = {SIAM Journal on Discrete Mathematics},
  volume  = {39},
  number  = {3},
  pages   = {1657--1682},
  year    = {2025}
}

@book{lidl1997finite,
  author    = {Lidl, Rudolf and Niederreiter, Harald},
  title     = {Finite Fields},
  publisher = {Cambridge University Press},
  year      = {1997}
}

@article{lovasz1975spectra,
  author  = {Lov{\'a}sz, L{\'a}szl{\'o}},
  title   = {Spectra of Graphs with Transitive Groups},
  journal = {Periodica Mathematica Hungarica},
  volume  = {6},
  number  = {2},
  pages   = {191--195},
  year    = {1975}
}

@article{lubotzky1988ramanujan,
  author  = {Lubotzky, Alexander and Phillips, Ralph and Sarnak, Peter},
  title   = {{Ramanujan} Graphs},
  journal = {Combinatorica},
  volume  = {8},
  number  = {3},
  pages   = {261--277},
  year    = {1988}
}

@article{mattheus2025off,
  author  = {Mattheus, Sam and Mubayi, Dhruv and Nie, Jiaxi and Verstra{\"e}te, Jacques},
  title   = {Off-Diagonal {Ramsey} Numbers for Slowly Growing Hypergraphs},
  journal = {Random Structures \& Algorithms},
  volume  = {66},
  number  = {1},
  pages   = {e21284},
  year    = {2025}
}

@article{mohammadi2023point,
  author  = {Mohammadi, Ali and Pham, Thang and Warren, Audie},
  title   = {A Point-Conic Incidence Bound and Applications over $\mathbb{F}_p$},
  journal = {European Journal of Combinatorics},
  volume  = {107},
  pages   = {103596},
  year    = {2023}
}

@article{murphy2016point,
  author  = {Murphy, Brendan and Petridis, Giorgis},
  title   = {A Point-Line Incidence Identity in Finite Fields, and Applications},
  journal = {Moscow Journal of Combinatorics and Number Theory},
  volume  = {6},
  number  = {1},
  pages   = {64--95},
  year    = {2016}
}

@article{pach1998number,
  author  = {Pach, J{\'a}nos and Sharir, Micha},
  title   = {On the Number of Incidences between Points and Curves},
  journal = {Combinatorics, Probability and Computing},
  volume  = {7},
  number  = {1},
  pages   = {121--127},
  year    = {1998}
}

@article{petridis2022energy,
  author  = {Petridis, Giorgis and Roche-Newton, Oliver and Rudnev, Misha and Warren, Audie},
  title   = {An Energy Bound in the Affine Group},
  journal = {International Mathematics Research Notices},
  volume  = {2022},
  number  = {2},
  pages   = {1154--1172},
  year    = {2022}
}

@article{phuong2016incidence,
  author  = {Phuong, Nguyen Duy and Pham, Thang and Sang, Nguyen Minh and Valculescu, Claudiu and Vinh, Le Anh},
  title   = {Incidence Bounds and Applications over Finite Fields},
  journal = {arXiv preprint arXiv:1601.00290},
  year    = {2016}
}

@inproceedings{phuong2017incidences,
  author    = {Phuong, Nguyen D. and Thang, Pham and Vinh, Le A.},
  title     = {Incidences between Points and Generalized Spheres over Finite Fields and Related Problems},
  booktitle = {Forum Mathematicum},
  volume    = {29},
  number    = {2},
  pages     = {449--456},
  year      = {2017},
  publisher = {De Gruyter}
}

@article{rudnev2018number,
  author  = {Rudnev, Misha},
  title   = {On the Number of Incidences between Points and Planes in Three Dimensions},
  journal = {Combinatorica},
  volume  = {38},
  number  = {1},
  pages   = {219--254},
  year    = {2018}
}

@article{rudnev2022growth,
  author  = {Rudnev, Misha and Shkredov, Ilya D.},
  title   = {On the Growth Rate in $\mathrm{SL}_2(\mathbb{F}_p)$, the Affine Group and Sum-Product Type Implications},
  journal = {Mathematika},
  volume  = {68},
  number  = {3},
  pages   = {738--783},
  year    = {2022}
}

@article{sharir2018distinct,
  author  = {Sharir, Micha and Smorodinsky, Shakhar and Valculescu, Claudiu and De Zeeuw, Frank},
  title   = {Distinct Distances between Points and Lines},
  journal = {Computational Geometry},
  volume  = {69},
  pages   = {2--15},
  year    = {2018}
}

@incollection{spencer1984unit,
  author    = {Spencer, Joel and Szemer{\'e}di, Endre and Trotter, William T.},
  title     = {Unit Distances in the {Euclidean} Plane},
  booktitle = {Graph Theory and Combinatorics},
  pages     = {294--304},
  year      = {1984},
  publisher = {Academic Press}
}

@book{steinberg2012representation,
  author    = {Steinberg, Benjamin},
  title     = {Representation Theory of Finite Groups: An Introductory Approach},
  publisher = {Springer},
  year      = {2012}
}

@article{stevens2017improved,
  author  = {Stevens, Sophie and De Zeeuw, Frank},
  title   = {An Improved Point-Line Incidence Bound over Arbitrary Fields},
  journal = {Bulletin of the London Mathematical Society},
  volume  = {49},
  number  = {5},
  pages   = {842--858},
  year    = {2017}
}

@article{tamo2025points,
  author  = {Tamo, Itzhak},
  title   = {Point-Polynomial Incidences Theorem with an Application to {Reed--Solomon} Codes},
  journal = {SIAM Journal on Discrete Mathematics},
  volume  = {39},
  number  = {3},
  pages   = {1568--1586},
  year    = {2025}
}

@article{toth2015szemeredi,
  author  = {T{\'o}th, Csaba D.},
  title   = {The {Szemer{\'e}di--Trotter} Theorem in the Complex Plane},
  journal = {Combinatorica},
  volume  = {35},
  pages   = {95--126},
  year    = {2015}
}

@article{vinh2011szemeredi,
  author  = {Vinh, Le Anh},
  title   = {The {Szemer{\'e}di--Trotter} Type Theorem and the Sum-Product Estimate in Finite Fields},
  journal = {European Journal of Combinatorics},
  volume  = {32},
  number  = {8},
  pages   = {1177--1181},
  year    = {2011}
}
}

\appendix

\section{Appendix}\label{sec-app}


\subsection{Incidence counting for Example~\ref{ex:counter-example}}\label{sec-app-A}



\noindent We compute the number of incidences directly:
\begin{equation*}
\begin{split}
I(\mathcal{P}_0, \mathcal{L}_0)
&=\sum_{\alpha \in \mathbb{F}_q^m} |\{f \in \mathcal{L}_0 : f(\alpha) = 0\}| \\
&=\sum_{\alpha \in \mathbb{F}_q^m: \alpha_1 = 0} |\{f \in \mathcal{L}_0 : f(\alpha) = 0\}| + \sum_{\alpha \in \mathbb{F}_q^m: \alpha_1 \ne 0} |\{f \in \mathcal{L}_0 : f(\alpha) = 0\}| \\
&= q^{m-1} |\mathcal{L}_0| + \sum_{\alpha\in\mathbb{F}_q^m:\alpha_1 \ne 0} |\{g \in V_{m, r-1} : g(\alpha) = 0\}| \\
&= q^{m-1} |\mathcal{L}_0| + (q^m - q^{m-1}) \cdot q^{\dim \mathcal{L}_0 - 1} \\
&= \left(2 - \frac{1}{q} \right) q^{m-1} |\mathcal{L}_0|.
\end{split}
\end{equation*}

\subsection{Proof of the Cauchy-Schwarz incidence bound \eqref{eq:point-multi-poly-CS}}\label{sec:app-B}

\noindent Here we present a proof of \eqref{eq:point-multi-poly-CS} for completeness. For each $v \in \mathbb{F}_q^{m+1}$, let 
$i_{\mathcal{L}}(v) = |\{ f \in \mathcal{L} : f(v_1, \ldots, v_m) = v_{m+1} \}|$ 
be the number of polynomials in $\mathcal{L}$ that are incident to $v$. Then $I(\mathcal{P},\mathcal{L})=\sum_{v \in \mathcal{P}} i_{\mathcal{L}}(v)$. Moreover, 
\begin{equation*}
    \begin{aligned}
        \sum_{v \in \mathcal{P}} i_{\mathcal{L}}(v)^2 
        & = \sum_{v \in \mathcal{P}} |\{ (f,f') \in \mathcal{L}\times\mathcal{L} : f(v_1,\ldots,v_m) = f'(v_1,\ldots,v_m) = v_{m+1} \}| \\
        & = I(\mathcal{P},\mathcal{L}) + \sum_{f,f' \in \mathcal{L},~f\neq f'} |\{ v \in \mathcal{P} : (f - f')(v_1, \ldots, v_m) = 0 \}|\\
        & \le I(\mathcal{P},\mathcal{L}) + r q^{m-1} |\mathcal{L}|^2,
    \end{aligned}
\end{equation*} 

\noindent where the last inequality follows from the Schwartz-Zippel lemma. By the Cauchy-Schwarz inequality, we have
\begin{equation*}
        \frac{I(\mathcal{P},\mathcal{L})^2}{|\mathcal{P}|}=\frac{1}{|\mathcal{P}|} \left( \sum_{v \in \mathcal{P}} i_{\mathcal{L}}(v) \right)^2 \leq \sum_{v \in \mathcal{P}} i_{\mathcal{L}}(v)^2 \le I(\mathcal{P},\mathcal{L}) + r q^{m-1} |\mathcal{L}|^2,
\end{equation*}

\noindent implying that $I(\mathcal{P},\mathcal{L}) \leq |\mathcal{P}| + r^{1/2} q^{(m-1)/2} |\mathcal{P}|^{1/2} |\mathcal{L}|$. 

It remains to show $I(\mathcal{P},\mathcal{L}) \leq |\mathcal{L}| + q^{\dim V_{m,r}/2 - 1} |\mathcal{P}| |\mathcal{L}|^{1/2}$. By symmetry, for each $f \in \mathcal{L}$, let 
$i_{\mathcal{P}}(f) = |\{ v \in \mathcal{P} : f(v_1, \ldots, v_m) = v_{m+1} \}|$ 
be the number of points in $\mathcal{P}$ that are incident to $f$. Then $I(\mathcal{P},\mathcal{L})=\sum_{f \in \mathcal{L}} i_{\mathcal{P}}(f)$. Moreover, 
\begin{equation*}
    \begin{aligned}
        \sum_{f \in \mathcal{L}} i_{\mathcal{P}}(f)^2 
        & = \sum_{f \in \mathcal{L}} |\{ (v,v') \in \mathcal{P}\times\mathcal{P} : f(v_1,\ldots,v_m) = v_{m+1},~f(v'_1,\ldots,v'_m) = v'_{m+1} \}| \\
        & = I(\mathcal{P},\mathcal{L}) + \sum_{v,v' \in \mathcal{P},~v\neq v'} |\{ f \in \mathcal{L} : f(v_1,\ldots,v_m) = v_{m+1},~f(v'_1,\ldots,v'_m) = v'_{m+1} \}|\\
        & \le I(\mathcal{P},\mathcal{L}) + q^{\dim V_{m,r}-2} |\mathcal{P}|^2,
    \end{aligned}
\end{equation*} 

\noindent where the last inequality follows from the fact that for distinct $v,v'$, $\{ f \in V_{m,r} : f(v_1,\ldots,v_m) = v_{m+1},~f(v'_1,\ldots,v'_m) = v'_{m+1} \}$ is either empty or forms an affine subspace of $V_{m,r}$ of codimension $2$. By the Cauchy-Schwarz inequality, one can similarly show that $I(\mathcal{P},\mathcal{L}) \leq |\mathcal{L}| + q^{\dim V_{m,r}/2 - 1} |\mathcal{P}| |\mathcal{L}|^{1/2}$. We omit the details.

\end{document}